\theoremstyle{plain}
\newtheorem{theorem}{Theorem}[section] 
\newtheorem{corollary}[theorem]{Corollary}
\newtheorem{definition}[theorem]{Definition}
\newtheorem{assumption}[theorem]{Assumption}
\newtheorem{lemma}[theorem]{Lemma}
\newtheorem{remark}[theorem]{Remark}
\newtheorem{definition/proposition}[theorem]{Definition/Proposition}
\date{}
\theoremstyle{definition}
\title{A new axiomatics for masures II}
\author{Auguste \textsc{Hébert} \\Université de Lorraine, Institut Élie Cartan de Lorraine, F-54000 Nancy, France\\ UMR 7502,
auguste.hebert@univ-lorraine.fr}
\theoremstyle{definition}\newtheorem{thm*}{Theorem}
\theoremstyle{definition}
\newtheorem{proposition*}[thmbis]{Proposition}
\theoremstyle{definition}
\newtheorem{proposition**}[thmter]{Proposition}
\makeatletter \@addtoreset{figure}{section}\makeatother
\newcommand{\R}{\mathbb{R}}
\newcommand{\A}{\mathbb{A}}
\newcommand{\AC}{\mathcal{A}}
\newcommand{\N}{\mathbb{N}}
\newcommand{\Z}{\mathbb{Z}}
\newcommand{\C}{\mathbb{C}}
\newcommand{\I}{\mathcal{I}}
\newcommand{\T}{\mathcal{T}}
\newcommand{\q}{\mathfrak{q}}
\newcommand{\s}{\mathfrak{s}}
\newcommand{\RR}{\mathfrak{R}}
\newcommand{\M}{\mathcal{M}}
\newcommand{\In}{\mathrm{Int}}
\newcommand{\HC}{\mathcal{H}}
\newcommand{\KC}{\mathcal{K}}
\newcommand{\XC}{\mathcal{X}}
\newcommand{\SC}{\mathcal{S}}
\newcommand{\LC}{\mathcal{L}}
\newcommand{\MC}{\mathcal{M}}
\newcommand{\efface}[1]{}
\newcommand{\LCC}{\mathscr{L}}
\begin{document}


\maketitle


\begin{abstract}
Masures are generalizations of Bruhat-Tits buildings. They were introduced by Gaussent and Rousseau in order to study Kac-Moody groups over valued fields. We prove that the intersection of two apartments of  a masure is convex. Using this, we simplify the axiomatic definition of masures given by Rousseau.

\end{abstract}

\section{Introduction}
Bruhat-Tits buildings are an important tool in the study of reductive groups over non-Archimedean local fields. They were introduced by Bruhat and Tits in  \cite{bruhat1972groupes} and \cite{bruhat1984groupes}. Kac-Moody groups (à la Tits) are interesting infinite dimensional (if not reductive) generalizations of reductive groups.  In order to study them over fields endowed with a discrete valuation,  Gaussent and Rousseau introduced in \cite{gaussent2008kac} some spaces similar to Bruhat-Tits buildings, called  masures (also known as hovels), on which these groups act. Charignon and Rousseau generalized this construction in \cite{charignon2010immeubles},  \cite{rousseau2016groupes} and \cite{rousseau2017almost}: Charignon treated the almost split case and Rousseau suppressed restrictions on the base field and on the group. Thanks to these works, a masure is now associated with each almost split Kac-Moody group over a valued field (with some additional assumptions on the field in the non-split case, see \cite{rousseau2017almost}).
Masures enable to obtain results on the representation theory of almost split Kac-Moody groups over non-Archimedean local fields. For example, Bardy-Panse, Gaussent and Rousseau used them to associate with each such group a spherical and an Iwahori-Hecke algebra (see \cite{gaussent2014spherical} and \cite{bardy2016iwahori}, these algebras were already defined in the split affine case by Braverman, Kazhdan and Patnaik in \cite{braverman2011spherical} and \cite{braverman2016iwahori}).

Let $G$ be a split reductive group over a valued field $(\KC,\omega)$, where $\omega:\KC\rightarrow \R\cup\{\infty\}$ is a valuation. Let $T$ be a maximal split torus of $G$. Let $\A=Y\otimes \R$, where $Y$ is the cocharacter lattice of $(G,T)$. As a set, $\I=G\times \A/\sim$, where $\sim$ is some equivalence relation on $G\times \A$. The definition of $\sim$ is complicated and based on the notion of parahoric subgroups. However, many of the properties of $\I$ can be recovered from the fact that it satisfies the crucial properties (I1) and (I2) below. Let $\Phi$ denote the root system of $(G,T)$ and $\Lambda=\omega(\KC)\setminus\{\infty\}$. For $\alpha\in \Phi$ and $k\in \Lambda$, let $H_{\alpha,k}=\{x\in \A|\alpha(x)=-k\}$. The elements of the form $H_{\alpha,k}$, for $\alpha\in \Phi$ and $k\in \Lambda$ are called \textbf{walls}. A half-space of $\A$ delimited by some wall is called a half-apartment of $\A$.  An \textbf{apartment} of $\I$ (resp. a \textbf{half-apartment} of $\I$) is a set of the form $g.\A$ (resp. $g.D$), for some $g\in G$ (resp. and some half-apartment $D$ of $\A$). We call a set \textbf{enclosed} if it is a finite intersection of half-apartments.
Then $\I$ satisfies the following properties:\begin{itemize}
\item[(I1)] for  all $x,y\in \I$, there exists an apartment containing $\{x,y\}$,

\item[(I2)] if $A$ and $A'$ are two apartments of $\I$, then $A\cap A'$ is enclosed in $A$ and there exists $g\in G$ such that $g.A=A'$ and such that $g$ fixes $A\cap A'$.
\end{itemize}

 Note that (I1) is a building theoretic translation of the Cartan decomposition of $G$. 

Let $G$ be a split Kac-Moody group over a valued field $(\KC,\omega)$. Then similarly to the reductive case, the masure $\I$ of $G$ is defined as $G\times \A/\sim$, for some equivalence relation $\sim$ on $G\times \A$ (see \cite{gaussent2008kac} and \cite{rousseau2016groupes}). As  the Cartan decomposition does not hold in $G$ (unless $G$ is reductive), property (I1) is not necessarily satisfied by $\I$. It can be replaced by an axiom involving ``chimneys'', which are certain objects at the infinity of $\I$ (see (MA iii) in \ref{subsubDéfinition des masures}). This axiom corresponds to the Iwasawa and the Birkhoff decompositions in $G$. 
Gaussent and Rousseau proved  weak versions of (I2) in \cite{gaussent2008kac} and \cite{rousseau2011masures}. More precisely, let $\T$ be the Tits cone of $\A$. If $x,y\in \I$, one writes $x\leq y$ if there exists $g\in G$ such that $g.x,g.y\in \A$ and  $g.y-g.x\in \T$. It is proved in \cite{gaussent2008kac} that if $A,A'$ are two apartments, if $x,y\in A\cap A'$ are such that $x\leq y$, then $[x,y]_A$ (the line segment in $A$ joining $x$ to $y$) is equal to $[x,y]_{A'}$ and there exists $g\in G$ such that $g.A=A'$ and such that $g$ fixes $[x,y]_A$. This property is called preordered convexity and is fundamental to most of the applications of the theory of masures so far.   However, very few was known about convexity properties for pairs of points which are not preordered.  In \cite{hebert2020new}, we proved that when $\I$ is associated with an affine Kac-Moody group, then $\I$ satisfies (I2). In general we proved that (I2) is satisfied for pairs of apartments $A,A'$ such that $A\cap A'$ is ``large enough'' (more precisely, when $A\cap A'$ contains a generic ray, see \cite[Theorem 4.22]{hebert2020new}). We used this to simplify the definition of masures. In this paper we prove that (I2) is satisfied without assumption on $A\cap A'$.   More precisely, let $\I$ be a masure in the sense of  \cite[Définition 2.1]{rousseau2011masures}, satisfying some mild technical assumption (see assumption~\ref{assFreeness}). Then:

\begin{thm*}\label{thmIntro}(see Theorem~\ref{thmIntersection_apartments})
Let $A$ and $A'$ be two apartments of $\I$. Then $A\cap A'$ is enclosed and there exists an apartment isomorphism $\phi:A\rightarrow A'$ fixing $A\cap A'$.
\end{thm*}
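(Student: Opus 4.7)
The plan is to reduce the general case to \cite[Theorem 4.22]{hebert2020new}, which established (I2) under the hypothesis that $A\cap A'$ contains a generic ray. The central idea is: given any finite configuration of points inside $A\cap A'$, produce an auxiliary apartment $A''$ that contains this configuration and whose intersection with each of $A$ and $A'$ contains a generic ray; the statement for the pair $(A,A')$ then follows by chaining two applications of the earlier theorem through $A''$.

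I would first establish convexity of $A\cap A'$ inside $A$. Fix $x,y\in A\cap A'$. If $x$ and $y$ are preordered, the preordered convexity theorem of Gaussent--Rousseau immediately gives $[x,y]_A=[x,y]_{A'}$. In the general, unordered case, I would invoke the chimney axiom (MA~iii) to build an apartment $A''$ containing a chimney whose direction $\xi$ lies deep in the interior of the Tits cone $\T$ and whose base contains $\{x,y\}$. By construction, $A''$ then contains $x$, $y$, and a generic ray $\rr$ in direction $\xi$, and the germ of this ray lies in both $A\cap A''$ and $A'\cap A''$. Applying \cite[Theorem 4.22]{hebert2020new} to $(A,A'')$ yields an apartment isomorphism $A\to A''$ fixing $A\cap A''$, and in particular $[x,y]_A=[x,y]_{A''}$; applying it to $(A',A'')$ gives $[x,y]_{A''}=[x,y]_{A'}$. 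Combining the two, $[x,y]_A=[x,y]_{A'}\subset A\cap A'$, proving convexity.

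Once convexity is established, $A\cap A'$ is a closed convex subset of $A$. To upgrade this to enclosure, I would show that each support hyperplane of $A\cap A'$ in $A$ is a wall of $A$ (any non-wall supporting hyperplane could be crossed slightly using the local isomorphism furnished by the auxiliary $A''$, contradicting maximality of $A\cap A'$ along that direction), and that only finitely many such walls occur. The finiteness would follow from a local-finiteness argument at each boundary point combined with the control of behavior at infinity coming from the chimney construction. Finally, to construct the global apartment isomorphism $\phi:A\to A'$ fixing $A\cap A'$, I would glue the pointwise isomorphisms furnished by the auxiliary apartments of the convexity step, compatibility on overlaps being automatic since each local isomorphism fixes $A\cap A'$ pointwise.

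The main obstacle lies in the unordered case of the convexity step: the chimney produced by (MA~iii) is a priori an object at infinity, and ensuring that the apartment $A''$ it provides actually contains the prescribed points $x$ and $y$---not merely the germ of the chimney---will require an iterative refinement. I expect this refinement to proceed by successively enlarging $A''\cap A$ and $A''\cap A'$ using preordered convexity applied to well-chosen auxiliary points inside the Tits cone that dominate both $x$ and $y$, effectively ``pulling'' the base of the chimney back to contain $\{x,y\}$. Making this iteration terminate, and checking that it preserves the generic-ray condition at each stage, is the delicate point of the proof.
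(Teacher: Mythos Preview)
Your proposal has a genuine gap at precisely the point you flag as ``the main obstacle,'' and the suggested iterative refinement does not close it. Axiom (MA~iii) produces an apartment containing \emph{one} face (or chimney germ) and \emph{one} splayed chimney germ; it cannot in general produce an apartment containing two prescribed non-preordered points $x,y$ together with a generic ray. Your phrase ``a chimney whose base contains $\{x,y\}$'' is already problematic: the base of a chimney is a single face, anchored at a single point. Even granting an apartment $A''$ containing $x$, $y$, and a generic ray shared with $A$, there is no mechanism forcing $A'\cap A''$ to contain a generic ray as well---you would need the ray to lie in $A'$, and nothing in the construction arranges that. The ``pulling back the base via preordered convexity'' sketch does not help, because preordered convexity only moves you along preordered segments, and the whole difficulty is that $x$ and $y$ are not preordered.

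The paper does use an auxiliary-apartment reduction (via \cite[Proposition~5.17]{hebert2020new}, which yields an apartment $\tilde A$ containing a chamber of $A$ at $a$ and a chamber of $B$ at $b$), but this only guarantees that $A\cap\tilde A$ and $\tilde A\cap B$ have \emph{nonempty interior}, not that they contain generic rays. So one cannot invoke (MA~ii) or \cite[Theorem~4.22]{hebert2020new} directly. The substantive new ingredient---which your proposal bypasses---is Lemma~\ref{lemIntersection_non-empty_interior}: if two apartments meet in a set with nonempty interior, their intersection is convex. This is proved by studying the images under $\rho_{-\infty}$ and $\rho_{+\infty}$ of a segment $[a,b]_A$ and showing, via a Hecke-path-type monotonicity of derivatives (Lemmas~\ref{lemWeak_version_Hecke_paths} and~\ref{lemPaths_with_increasing_derivative}), that both retractions send the segment to the same straight segment, forcing it into $\A$. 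Once that lemma is in hand, the auxiliary-apartment step and the passage to enclosure go through quickly using results already in \cite{hebert2020new}. Your plan, as written, attempts to skip the Hecke-path analysis entirely, and that is where it fails.
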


Note that in the theorem above, the half-apartments of $\A$ are delimited by real roots of $(G,T)$ (and not by imaginary roots as it was the case in \cite{gaussent2008kac}).

 We then use this theorem to give a simplified definition of masures equivalent to \cite[Définition 2.1]{rousseau2011masures}. This also simplifies the definition given in \cite[Theorem 1.5]{hebert2020new} (see Corollary~\ref{corDefinition}).

\paragraph{Framework}

Actually we do not limit our study to masures associated with Kac-Moody groups: for us a masure is a set satisfying the axioms of \cite{rousseau2011masures} and whose apartments are associated with  a root generating system (and thus with a Kac-Moody matrix). We do not assume the existence of a group acting strongly transitively on it. Our results apply to the masures  associated with split Kac-Moody groups over valued fields constructed in \cite{gaussent2008kac} and \cite{rousseau2016groupes} and to masures associated with almost split Kac-Moody groups over valued fields (satisfying some additional conditions, see \cite[6.1]{rousseau2017almost}) in \cite{rousseau2017almost}. Contrary to \cite{rousseau2011masures}, we assume that the family of simple coroots is free in $\A$ (see assumption~\ref{assFreeness}).

\paragraph{Comments on the proof of Theorem~\ref{thmIntro}}  In \cite{hebert2020new}, we proved that if $A$ and $A'$ are apartments containing a common generic ray, then $A\cap A'$ is enclosed by   following the steps below:

\begin{itemize}
\item[(1)] We prove that if $B,B'$ are any two apartments, then $B\cap B'$ can be written as a finite union of enclosed subsets of $B$.

\item[(2)] We prove that $A\cap A'$ is convex. Using (1),  we deduce that it is enclosed.
\end{itemize}

As we already proved (1) in \cite{hebert2020new}, the main difficulty in the proof of Theorem~\ref{thmIntro} is to prove the convexity of $A\cap A'$ (without assumption on $A$ and $A'$). In this paper, we prove it directly, without using step (2) of \cite{hebert2020new}. Our proof of the convexity of $A\cap A'$ is based on the study of some kind of Hecke paths, that is, on the study of the images of line segments by  retractions centered at a sector-germ. Our proof of the convexity is actually simpler than the proof of step (2) in \cite{hebert2020new} (and more general).

\paragraph{Organization of the paper} In section~\ref{secApartment standard}, we  describe the general framework
and recall the definition of masures.

In section~\ref{secIntersection_apartments}, we prove Theorem~\ref{thmIntro}.

\paragraph{Funding}
The author was supported by the ANR grant ANR-15-CE40-0012. 

\paragraph{Acknowledgment} I would like to thank the referee for his/her valuable comments and suggestions.

\section{General framework, Masure}\label{secApartment standard}

In this section, we define our framework and recall the definition of masures. We give the definition of \cite{hebert2020new}. 

\subsection{Standard apartment}

\subsubsection{Root generating system}

Let $A$ be a \textbf{Kac-Moody matrix} (also known as generalized Cartan matrix) i.e a square matrix $A=(a_{i,j})_{i,j\in I}$ with integers coefficients, indexed by  a finite set $I$ and satisfying: 
\begin{enumerate}
\item $\forall i\in I,\ a_{i,i}=2$

\item $\forall (i,j)\in I^2|i \neq j,\ a_{i,j}\leq 0$

\item $\forall (i,j)\in I^2,\ a_{i,j}=0 \Leftrightarrow a_{j,i}=0$.
\end{enumerate}

 A  \textbf{root generating system}   of type $A$ is a $5$-tuple $\mathcal{S}=(A,X,Y,(\alpha_i)_{i\in I},(\alpha_i^\vee)_{i\in I})$ made of a Kac-Moody matrix $A$ indexed by $I$, of two dual free $\Z$-modules $X$ (of \textbf{characters}) and $Y$ (of \textbf{cocharacters}) of finite rank $\mathrm{rk}(X)$, a family $(\alpha_i)_{i\in I}$ (of \textbf{simple roots}) in $X$ and a family $(\alpha_i^\vee)_{i\in I}$ (of \textbf{simple coroots}) in $Y$. They have to satisfy the following compatibility condition: $a_{i,j}=\alpha_j(\alpha_i^\vee)$ for all $i,j\in I$.
 
 \begin{assumption}\label{assFreeness}
 We assume that   $(\alpha_i)_{i\in I}$ is free in $\A^*$ and that    $(\alpha_i^\vee)_{i\in I}$ is free in   $\A$. 
 \end{assumption}

Let $\A=Y\otimes \R$. Every element of $X$ induces a linear form on $\A$. We consider $X$ as a subset of the dual $\A^*$ of $\A$: the $\alpha_i$, $i\in I$ are viewed as linear forms on $\A$. For $i\in I$, we define an involution $r_i$ of $\A$ by $r_i(v)=v-\alpha_i(v)\alpha_i^\vee$ for all $v\in \A$. Its space of fixed points is $\ker \alpha_i$. The subgroup of $\mathrm{GL}(\A)$ generated by the $\alpha_i$ for $i\in I$ is denoted by $W^v$ and is called the \textbf{vectorial Weyl group} of $\mathcal S$. Then $(W^v,\{r_i|i\in I\})$ is a Coxeter system.

One defines an action of the group $W^v$ on $\A^*$ as follows: if $x\in \A$, $w\in W^v$ and $\alpha\in \A^*$, then $(w.\alpha)(x)=\alpha(w^{-1}.x)$. Let $\Phi=\{w.\alpha_i|(w,i)\in W^v\times I\}$ be the set of \textbf{(real) roots}. Then $\Phi\subset Q$, where $Q=\bigoplus_{i\in I}\Z\alpha_i$. Let $Q^+=\bigoplus_{i\in I} \N \alpha_i$, $\Phi^+=Q^+\cap \Phi$ and $\Phi^-=(-Q^+)\cap \Phi$. Then $\Phi=\Phi^+\sqcup \Phi^-$.

We set $Q^\vee_\R=\bigoplus_{i\in I} \R\alpha_i^\vee$ and $Q^\vee_{\R_+}=\bigoplus_{i\in I} \R_+\alpha_i^\vee$. For $x,y\in \A$, we write $x\leq_{Q^\vee_\R} y$ if $y-x\in Q^\vee_{\R_+}$.

\subsubsection{Vectorial faces and Tits cone}\label{subsubsecVectorial faces}

Define $C_f^v=\{v\in \A|\  \alpha_i(v)>0,\ \forall i\in I\}$. We call it the \textbf{fundamental vectorial chamber}. For $J\subset I$, one sets $F^v(J)=\{v\in \A|\ \alpha_i(v)=0\ \forall i\in J,\alpha_i(v)>0\ \forall i\in J\backslash I\}$. Then the closure $\overline{C_f^v}$ of $C_f^v$ is the union of the $F^v(J)$ for $J\subset I$. The \textbf{positive} (resp. \textbf{negative}) \textbf{vectorial faces} are the sets $w.F^v(J)$ (resp. $-w.F^v(J)$) for $w\in W^v$  and $J\subset I$. A \textbf{vectorial face} is either a positive vectorial face or a negative vectorial face. We call \textbf{positive chamber} (resp. \textbf{negative}) every cone  of the form $w.C_f^v$ for some $w\in W^v$ (resp. $-w.C_f^v$).  For all $x\in C_f^v$ and for all $w\in W^v$, $w.x=x$ implies that $w=1$. In particular the action of $w$ on the positive chambers is simply transitive. The \textbf{Tits cone} $\mathcal T$ is defined by $\mathcal{T}=\bigcup_{w\in W^v} w.\overline{C^v_f}$.
One defines a $W^v$-invariant preorder $\leq$  on $\A$, the \textbf{Tits preorder}   by: \[\forall (x,y)\in \A\mathrm{}^2,\ x\leq y\ \Leftrightarrow\ y-x\in \mathcal{T}.\]

\subsubsection{Affine Weyl group of  $\A$}\label{subsubGroupe de Weyl}
We now define the Weyl group $W$ of $\A$.   If $\XC$ is an affine subspace of $\A$, one denotes by $\vec{\XC}$ its direction. One equips $\A$ with a family $\M$ of affine hyperplanes called \textbf{real walls} such that: 
\begin{enumerate}

\item For all $M\in \M$, there exists $\alpha_M\in \Phi$ such that $\vec{M}=\ker(\alpha_M)$.

\item For all $\alpha\in \Phi$, there exists an infinite number of hyperplanes $M\in \M$ such that $\alpha=\alpha_M$.

\item\label{itStabilité des murs sous le groupe de weyl} If $M\in \M$, we denote by $r_M$ the reflection with respect to the hyperplane $M$ whose associated linear map is $r_{\alpha_M}$. We assume that the group $W$ generated by the $r_M$ for $M\in \M$ stabilizes $\M$. 
\end{enumerate}

The group $W$  is the Weyl group of $\A$. We assume that $0$ is special (i.e we assume that $\ker \alpha\in \MC$ for every $\alpha\in \Phi$) and thus $W\supset W^v$. 

For $\alpha\in \A^*$ and $k\in \R$, set $M(\alpha,k)=\{v\in \A| \alpha(v)+k=0\}$. Then for all $M\in \M$, there exists $\alpha\in \Phi$ and $k_M\in \R$ such that $M=M(\alpha,k_M)$. For $\alpha\in \Phi$, set $\Lambda_\alpha=\{k_M|\ M\in \mathcal{M}\mathrm{\ and\ }\vec{M}=\ker(\alpha)\}$. Then $\Lambda_{w.\alpha}=\Lambda_\alpha$ for all $w\in W^v$ and $\alpha\in \Phi$. 

If $\alpha\in \Phi$, one denotes by  $\tilde{\Lambda}_\alpha$ the subgroup of $\R$ generated by $\Lambda_\alpha$.  By~(\ref{itStabilité des murs sous le groupe de weyl}), $\Lambda_\alpha=\Lambda_{\alpha}+2\tilde{\Lambda}_\alpha$ for all $\alpha\in \Phi$. In particular, $\Lambda_\alpha=-\Lambda_{\alpha}$ and when $\Lambda_\alpha$ is discrete, $\tilde{\Lambda}_\alpha=\Lambda_\alpha$ is isomorphic to $\Z$. 

One sets $Q^\vee= \bigoplus_{\alpha\in \Phi} \tilde{\Lambda}_\alpha \alpha^\vee$. This is a subgroup of $\A$ stable under the action of $W^v$. Then one has $W=W^v\ltimes Q^\vee$.

\subsubsection{Filters}

\begin{definition}
A filter on a set $E$ is a nonempty set $F$ of nonempty subsets of $E$ such that, for all subsets $S$, $S'$ of $E$,  if $S$, $S'\in F$ then $S\cap S'\in F$ and, if $S'\subset S$, with $S'\in F$, then $S\in F$.
\end{definition}

If $F$ is a filter on a set $E$, and $E'$ is a subset of $E$, one says that $F$ contains $E'$ if every element of $F$ contains $E'$. If $E'$ is nonempty, the set $F_{E'}$ of subsets of $E$ containing $E'$ is a filter. We will sometimes regard $E'$ as a filter on $E$ by identifying $F_{E'}$ and $E'$. If $F$ is a filter on $E$, its closure $\overline F$ (resp. its convex hull) is the filter of subsets of $E$ containing the closure (resp. the convex envelope) of some element of $F$. A filter $F$ is said to be contained in an other filter $F'$: $F\subset F'$ (resp. in a subset $Z$ in $E$: $F\subset Z$) if and only if any set in $F'$ (resp. if $Z$) is in $F$. 

If $x\in \A$ and $\Omega$ is a subset of $\A$ containing $x$ in its closure, then the \textbf{germ} of $\Omega$ in $x$ is the filter $germ_x(\Omega)$ of subsets of $\A$ containing a neighborhood of $x$ in $\Omega$.

A \textbf{sector} in $\A$ is a set of the form $\mathfrak{s}=x+C^v$ with $C^v=\pm w.C_f^v$ for some $x\in \A$ and $w\in W^v$. A point $u$ such that $\mathfrak{s}=u+C^v$ is called a \textbf{base point of} $\mathfrak{s}$ and $C^v$ is its \textbf{direction}.  The intersection of two sectors of the same direction is  a sector of the same direction.

The \textbf{sector-germ} of a sector $\mathfrak{s}=x+C^v$ is the filter $\mathfrak{S}$ of subsets of $\A$ containing an $\A$-translate of $\mathfrak{s}$. It only depends on the direction $C^v$. We denote by $+\infty$ (resp. $-\infty$) the sector-germ of $C_f^v$ (resp. of $-C_f^v$).

A ray $\delta$ with base point $x$ and containing $y\neq x$ (or the interval $]x,y]=[x,y]\backslash\{x\}$ or $[x,y]$ or the line containing $x$ and $y$) is called \textbf{preordered} if $x\leq y$ or $y\leq x$ and \textbf{generic} if $y-x\in \pm\mathring \T$, the interior of $\pm \T$.

 For $\alpha\in \Phi$, and $k\in \R\cup\{+\infty\}$, let $D(\alpha,k)=\{v\in \A| \alpha(v)+k\geq 0\}$ (and $D(\alpha,+\infty)=\A\mathrm{}$) and $D^\circ(\alpha,k)=\{v\in \A|\ \alpha(v)+k > 0\}$ (for $\alpha\in\Phi$ and $k\in \R\cup\{+\infty\}$). 

 Let $\LCC$ be the set of families $(\Lambda'_\alpha)_{\alpha\in \Phi}$ such that  $\Lambda_\alpha\subset \Lambda'_\alpha\subset \R$ and $\Lambda_{\alpha}'=-\Lambda_{-\alpha}'$, for $\alpha\in \Phi$.

An \textbf{apartment} is a root generating system equipped with a Weyl group $W$ (i.e with a set $\mathcal{M}$ of real walls, see~\ref{subsubGroupe de Weyl}) and a family $\Lambda'\in \LCC$. Let $\underline{\A}=(\mathcal{S},W,\Lambda')$ be an apartment and $\A$ be the underlying affine space.  A set of the form $M(\alpha,k)$, with $\alpha\in \Phi$ and $k\in \Lambda'_\alpha$  is called a \textbf{wall} of $\A$ and a set of the form $D(\alpha,k)$, with $\alpha\in \Phi$ and $k\in \Lambda'_\alpha$ is called a \textbf{half-apartment} of $\A$. A subset $X$ of $\A$ is said to be \textbf{enclosed} if there exist $k\in \N$, $\beta_1,\ldots,\beta_k\in \Phi$ and $(\lambda_1,\ldots,\lambda_k)\in\prod_{i=1}^k \Lambda'_{\beta_i}$ such that $X=\bigcap_{i=1}^k D(\beta_i,\lambda_i)$.

\subsection{Masure}

In this section, we define masures.

\subsubsection{Definitions of  faces, chimneys and related notions}\label{subsubDéfinition des faces,...}
Let $\underline{\A}=(\mathcal{S},W,\Lambda')$ be an apartment and $\A$ be the underlying affine space.

Let $x\in \A$ and $F^v$ be a vectorial face.  The \textbf{local-face} $F^\ell(x,F^v)=germ_x(x+F^v)$  is the filter defined as the  intersection of $x+F^v$ with the the filter of neighborhoods of $x$ in $\A$. The \textbf{face}  $F(x,F^v)$ is the filter consisting of the subsets containing a finite  intersection  of half-spaces $D(\alpha,\lambda_\alpha)$ or $D^\circ(\alpha,\lambda_\alpha)$, with $\lambda
_\alpha\in \Lambda'_\alpha\cup\{+\infty\}$ for all $\alpha\in \Phi$  (at most one $\lambda_\alpha\in \Lambda_\alpha$ for each $\alpha\in \Phi$).  We say that $F$ is \textbf{positive} (or \textbf{negative}) if $F^v$ is.

Let $F,F'$ be two faces. We say that $F'$ \textbf{dominates} $F$ if $F\subset \overline{F'}$. The dimension of a face $F$ is the smallest dimension of an affine space generated by some $S\in F$. Such an affine space is unique and is called its \textbf{support}. A face is said to be \textbf{spherical} if the direction of its support meets the open Tits cone $\mathring \T$; then its pointwise stabilizer $W_F$ in $W^v$ is finite.

A \textbf{chamber} (or alcove) is a face of the form $F(x,C^v)$ where $x\in \A$ and $C^v$ is a vectorial chamber of $\A$.

A \textbf{panel} is a face which is maximal (for the domination relation) among the faces contained in at least one wall. 

Let $F=F(x,F^v)$ be a face ($x\in \A$ and $F^v$ is a vectorial face). The \textbf{chimney}  $\mathfrak{r}(F,F^v)$ is the filter consisting of the sets containing an enclosed set containing $F+F^v$. The face $F$ is the basis of the chimney and the vectorial face $F^v$ its direction. A chimney is \textbf{splayed} if $F^v$ is spherical.

A \textbf{shortening} of a chimney $\mathfrak{r}(F,F^v)$, with $F=F(x,F_0^v)$ is a chimney of the form $\mathfrak{r} (F(x+\xi,F_0^v),F^v)$ for some $\xi\in \overline{F^v}$. The \textbf{germ} of a chimney $\mathfrak{r}$ is the filter of subsets of $\A$ containing a shortening of $\mathfrak{r}$ (this definition of shortening is slightly different from the one of \cite{rousseau2011masures} 1.12 but follows \cite{rousseau2017almost} 3.6) and we obtain the same germs with these two definitions).

\subsubsection{Masure}\label{subsubDéfinition des masures}

An \textbf{apartment of type }$\A$ is a set $\underline{A}$ with a nonempty set $\mathrm{Isom}(\A,A)$ of bijections (called \textbf{Weyl-isomorphisms}) such that if $f_0\in \mathrm{Isom}(\A,A)$ then $f\in \mathrm{Isom}(\A,A)$ if and only if there exists $w\in W$ satisfying $f=f_0\circ w$. We will say \textbf{isomorphism} instead of Weyl-isomorphism in the sequel. An isomorphism between two apartments $\phi:A\rightarrow A'$ is a bijection such that ($f\in \mathrm{Isom}(\mathbb{A},A)$ if, and only if, $\phi \circ f\in \mathrm{Isom}(\A,A')$). We extend all the notions that are preserved by $W$ to each apartment. Thus sectors, enclosures, faces and chimneys are well defined in any apartment of type $\A$.
If $A$ and $B$ are two apartments, and $\phi:A\rightarrow B$ is an  apartment isomorphism  fixing some set $X$, we write $\phi:A\overset{X}{\rightarrow} B$.

\begin{definition}\label{defMasures}
A masure of type $\underline{\A}=(\SC,W,\Lambda')$ is a set $\mathcal{I}$ endowed with a covering $\mathcal{A}$ of subsets called \textbf{apartments} such that: 

\par (MA i) Any $A\in \mathcal{A}$ is equipped with the  structure of an apartment of type $\A$.

\par (MA ii) : if two apartments $A,A'$ contain a generic ray, then $A\cap A'$ is enclosed and there exists an apartment isomorphism $\phi:A\overset{A\cap A'}{\rightarrow} A'$.

\par (MA iii): if $\RR$ is the germ of a splayed chimney and if $F$ is a face or a germ of a chimney, then there exists an apartment containing $\RR$ and $F$.
\end{definition}

In this definition, we say that an apartment contains a germ of a filter if it contains at least one element of this germ. We say that a map fixes a germ if it fixes at least one element of this germ.

The main examples of masures are the masures associated with  (almost) split Kac-Moody group over valued fields constructed in \cite{rousseau2016groupes} and \cite{rousseau2017almost}. 

By \cite[Theorem 5.1]{hebert2020new}, definition~\ref{defMasures} is equivalent to \cite[Définition 2.1]{rousseau2011masures} (at least under Assumption~\ref{assFreeness}).

\subsubsection{Example: masure associated with a split Kac-Moody group over a valued field}\label{subsecMasure associée} 
Let $A$ be a Kac-Moody matrix and $\mathcal{S}$ be a root generating system of type $A$.
We consider the group functor $\mathbf{G}$ associated with the root generating system $\mathcal{S}$ in \cite{tits1987uniqueness} and in  \cite[Chapitre 8]{remy2002groupes}. This functor is a functor from the category of rings to the category of groups satisfying axioms (KMG1) to (KMG 9) of \cite{tits1987uniqueness}. When $R$ is a field, $\mathbf{G}(R)$ is uniquely determined by these axioms by Theorem 1' of \cite{tits1987uniqueness}. This functor contains a toric functor $\mathbf{T}$, from the category of rings to the category of commutative groups (denoted $\mathcal{T}$ in \cite{remy2002groupes}) and two functors $\mathbf{U^+}$ and $\mathbf{U^-}$ from the category of rings to the category of groups. 

 Let $\mathcal{K}$ be a field equipped with a non-trivial valuation $\omega:\mathcal{K}\rightarrow \R\cup\{+\infty\}$, $\mathcal{O}$ be its ring of integers and $G=\mathbf{G}(\mathcal{K})$ (and $U^+=\mathbf{U^+}(\mathcal{K})$, ...). For all $\epsilon\in \{-,+\}$, and all $\alpha\in \Phi_\epsilon$, we have an isomorphism $x_\alpha$ from $\mathcal{K}$ to a group $U_\alpha$. For all $k\in \R$, one defines a subgroup $U_{\alpha,k}:=x_\alpha(\{u\in \mathcal{K}|\ \omega(u)\geq k\})$. Let $\I$ be the masure associated with $G$ constructed in \cite{rousseau2016groupes}. Then for all $\alpha\in \Phi$, $\Lambda_\alpha=\Lambda'_\alpha=\omega(\mathcal{K}^*)$. If moreover $\omega(\mathcal{K}^*)$ is discrete, one has (up to renormalization) $\Lambda_\alpha=\Z$ for all $\alpha\in \Phi$. 
  Moreover, we have: \begin{itemize}
\item[-] the fixator of $\A$ in $G$ is $H=\mathbf{T}(\mathcal{O})$ (by remark 3.2 of \cite{gaussent2008kac})

\item[-] for all $\alpha\in \Phi$ and $k\in \Z$, the fixator of $D(\alpha,k)$ in $G$ is $H.U_{\alpha,k}$ (by 4.2 7) of \cite{gaussent2008kac})

\item[-] for all $\epsilon\in \{-,+\}$, $H.U^\epsilon$ is the fixator of $\epsilon \infty$ (by 4.2 4) of \cite{gaussent2008kac})

\item[-] when moreover the residue field of $(\KC,\omega)$ contains $\C$,   the fixator of $\{0\}$ in $G$ is $K_s=\mathbf{G}(\mathcal{O})$ (by example 3.14 of \cite{gaussent2008kac}).

\end{itemize}

If moreover, $\mathcal{K}$ is local, with residue cardinal $q$, each panel is contained in $1+q$ chambers.

The group $G$ is reductive if and only if $W^v$ is finite. In this case, $\I$ is the usual Bruhat-Tits building of $G$ and one has $\T=\A$.

\subsubsection{Tits preorder on $\I$}
As the Tits preorder $\leq$   on $\A$ is invariant under the action of $W^v$, one can equip each apartment $A$ with $\leq_A$.  Let $A$ be an apartment of $\I$ and $x,y\in A$ be such that $x\leq_A y$. Then by   \cite[Proposition 5.4]{rousseau2011masures}, if $B$ is an apartment containing $x$ and $y$,  $[x,y]_A=[x,y]_B$ and there exists an apartment isomorphism $\psi:A\overset{[x,y]}{\rightarrow} B$. In particular, $x\leq_{B} y$. This defines a relation $\leq$  on $\I$.  By Théorème 5.9 of \cite{rousseau2011masures}, $\leq$ is a  preorder    on $\I$. It is invariant by apartment isomorphisms: if $A,B$ are apartments, $\phi:A\rightarrow B$ is an apartment isomorphism  and $x,y\in A$ are such that $x\leq y$, then $\phi(x)\leq \phi(y)$. We call it the \textbf{Tits preorder on $\I$}.

\subsubsection{Retractions centered at sector-germs}\label{subsecRétractions}
Let  $\s$ be a sector-germ of $\I$ and $A$ be an apartment containing it. Let $x\in \I$. By (MA iii), there exists an apartment $A_x$ of $\I$ containing $x$ and $\s$. By (MA ii), there exists an apartment isomorphism $\phi:A_x\rightarrow A$ fixing $\s$. By \cite[2.6]{rousseau2011masures}, $\phi(x)$ does not depend on the choices we made and thus we can set $\rho_{A,\mathfrak{s}}(x)=\phi(x)$.

The map $\rho_{A,\s}:\I\rightarrow A$ is the  \textbf{retraction onto $A$ centered at $\s$}.

We denote by $\rho_{+\infty}:\I\twoheadrightarrow \A$ (resp. $\rho_{-\infty}$) the retraction  onto $\A$ centered at $+\infty$ (resp. $-\infty$).

\section{Intersection of two apartments in a masure}\label{secIntersection_apartments}

Let $A,B$ be two apartments of $\I$. We prove below that $A\cap B$ is enclosed and that there exists $\phi:A\overset{A\cap B}{\rightarrow} B$ (see Theorem~\ref{thmIntersection_apartments}). Let us sketch our proof. We assume that $\A=B$. By results of \cite{hebert2020new}, the main difficulty is to prove that $A\cap \A$ is convex. We first assume that $A\cap \A$ has nonempty interior. Then using \cite[Proposition 9]{hebert2016distances} or \cite[Proposition 4.2.8]{hebert2018study}, we write $A=\bigcup_{i=1}^k P_i$, where $k\in \N$, the $P_i$ are enclosed and if $i\in \llbracket 1,k\rrbracket$, there exists an apartment $A_i$ containing $P_i,-\infty$ and an apartment isomorphism $A\overset{P_i}{\rightarrow} A_i$ (note that in \cite{hebert2016distances}, the masure $\I$ is moreover assumed to be semi-discrete, which means that $\Lambda'_\alpha=\Z$ for all $\alpha\in \Phi$, but this assumption is easily dropped for the mentioned results, see \cite{hebert2018study}). 
Let $\HC$ be a finite set of walls delimiting the $P_i$. If $a,b\in A$ we denote by $\tau_{a,b}:[0,1]\rightarrow A$ the affine parametrization of $[a,b]_A$ such that $\tau_{a,b}(0)=a$ and $\tau_{a,b}(1)=b$. Let $a,b\in A$ be such that at each time $t$ such that $\tau(t)$ is in some wall of $\HC$, then this wall is unique (as we shall see, almost every pair $(a,b)\in A^2$ satisfies this property). We prove that   $\pi=\rho_{-\infty}\circ \tau_{a,b}$ is a piecewise linear path whose left-hand and right-hand derivatives satisfy some growth property with respect to  $\leq_{Q^\vee_\R}$ (see Lemma~\ref{lemPaths_with_increasing_derivative}). We deduce that if $b\in A\cap \A$, then for almost all $a\in \A\cap A$, $\pi'_+(0)\leq_{Q^\vee_\R} b-a$. Applying the analogous inequality to $\rho_{+\infty}\circ \tau$, we deduce that $\rho_{-\infty}\circ \tau=\rho_{+\infty}\circ \tau$. We then deduce that $\tau\subset \A$ and we conclude by using a density argument.

\medskip
We equip every apartment with the topology defined by its structure of a finite dimensional real-affine space. If $E$ is a subset of an apartment $A$, we denote by $\In(E)$ or by $\mathring{E}$ its interior, depending on the context. This does not depend on the choice of an apartment containing $E$ by \cite[Proposition 3.26]{hebert2020new}.
Let $A$ be an apartment. For $a,b\in A$, $a\neq b$, we denote by $\LC(a,b)=\LC_A(a,b)$ the line containing $a$ and $b$ in $A$. Let  $x,y\in \I$ be such that $x\leq y$ and $A$ be an apartment containing $x$ and $y$. We write $x\mathring{<} y$ if there exists an apartment isomorphism $\phi:A\rightarrow \A$ such that $\phi(y)-\phi(x)\in \mathring{\T}$. This does not depend on the choices of $A$ and $\phi$. 

\begin{lemma}\label{lemLocal_intersection_half_apartment}
Let $x\in \I$ and $A,B$ be two apartments containing $x$. We assume that there exists a   neighborhood $V$ of $x$ in $A$ and a half-apartment $D$ of $A$ such that $A\cap B\supset D\cap V$. Then: \begin{enumerate}
\item Either $x\in \In(A\cap B)$  or there exists a neighborhood $\tilde{V}$ of $x$ in $A$ such that $A\cap B\cap \tilde{V}=D\cap \tilde{V}$.

\item If $x\notin \In (A\cap B)$, there exists an apartment $B'$ such that $B\cap B'$ is a half-apartment and such that $x\in \In (A\cap B')$. 
\end{enumerate} 
\end{lemma}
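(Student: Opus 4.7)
The plan is to first reduce to the case where $x$ lies on the wall $M$ of $A$ bordering $D=D(\alpha,k)$, with opposite half-apartment $D^{\mathrm{op}}=D(-\alpha,-k)$. Indeed, if $x\notin M$ then $x$ is a topological interior point of $D$ in $A$, so after shrinking $V$ we have $V\subset D\subset A\cap B$, hence $x\in\In(A\cap B)$; this settles the first alternative of (1) and makes the hypothesis of (2) vacuous. From now on assume $x\in M$.

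For (1), I would first exhibit a full generic ray inside $A\cap B$. Pick $\xi\in\mathring{\T}$ with $\alpha(\xi)>0$; such $\xi$ exists because $\mathring{\T}$ meets the open half-space $\{\alpha>0\}$ (indeed $C_f^v\subset\mathring{\T}\cap\{\alpha_i>0\}$ when $\alpha=\alpha_i$ is simple, and we conjugate by $W^v$ in general). Then the ray $r:=x+\R_{\ge 0}\xi$ lies in $D$, and for small $t>0$ the segment $[x,x+t\xi]_A$ lies in $D\cap V\subset A\cap B$. Setting
\[
T=\sup\bigl\{t\ge 0\mid [x,x+t\xi]_A\subset A\cap B\bigr\},
\]
the finite-union-of-enclosed description of $A\cap B$ from step~(1) of the introduction implies $A\cap B$ is closed. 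If $T<+\infty$, let $y:=x+T\xi\in A\cap B$; apply (MA iii) to a splayed chimney germ of $A$ at $y$ with direction containing $\xi$ and to a compatible chimney germ furnished by $B$. The resulting apartment fixes $[x,y]$ (by preordered convexity, \cite[Proposition 5.4]{rousseau2011masures}) and extends the segment strictly beyond $y$ along $\xi$, contradicting the maximality of $T$. Hence $T=+\infty$ and $r\subset A\cap B$. Axiom (MA ii) now yields an enclosed description $A\cap B=\bigcap_{i=1}^n D(\beta_i,k_i)$; if every $\beta_i(x)+k_i>0$ then $x\in\In(A\cap B)$, and otherwise each wall of the description passing through $x$ must equal $M$ (any other would cut into $D$, contradicting $D\cap V\subset A\cap B$) and the associated half-apartment must equal $D$, yielding $A\cap B\cap\tilde V=D\cap\tilde V$ on a suitable neighborhood $\tilde V$.

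For (2), assume $x\notin\In(A\cap B)$, so $A\cap B$ coincides locally with $D$ by (1). Choose $w\in W^v$ with $w.C_f^v\subset\{\alpha<0\}$ (which exists by applying $r_\alpha$ to $C_f^v$), and consider the splayed chimney $\mathfrak{r}_A$ in $A$ of basis $\{x\}$ and direction $w.C_f^v$; away from $x$ it lies in $\In(D^{\mathrm{op}})$. Let $\mathfrak{r}_B$ be a splayed chimney with basis $\{x\}$ and direction $C_f^v$; a shortening of it lies in $D\cap\tilde V\subset B$ and defines a chimney germ of $\mathcal{I}$ contained in $B$. Apply (MA iii) with the splayed germ of $\mathfrak{r}_A$ and the germ of $\mathfrak{r}_B$: the resulting apartment $B'$ contains sectors emanating from $x$ in both half-spaces delimited by $M$, so applying (1) to the pair $(A,B')$ yields $x\in\In(A\cap B')$. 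Moreover $B\cap B'$ contains the sector of $\mathfrak{r}_B$ (which lies in $D\cap B$) but, by a dimension argument based on $A\cap B=D$ near $x$, cannot contain a neighborhood of $x$; applying (1) to $(B,B')$ then forces $B\cap B'$ to be a half-apartment of $B$.

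The main obstacle is the extension of the germ of generic ray to the full ray inside $A\cap B$; the proof is a maximality argument that bootstraps (MA iii), preordered convexity, and the closedness (in fact the enclosed structure) of $A\cap B$ from \cite{hebert2020new}. Once this is achieved, (MA ii) produces an enclosed presentation of $A\cap B$ from which the dichotomy in (1) can be read off, and (2) follows from a direct application of (MA iii) and (1).
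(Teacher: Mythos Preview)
Your argument for (1) has a genuine gap: the claim that the full generic ray $r=x+\R_{\ge 0}\xi$ lies in $A\cap B$ is false in general. Under the hypotheses of the lemma, $A\cap B$ can perfectly well be bounded; already in a tree one can have $A\cap B$ equal to a finite segment $[x,x']_A$ with $x$ on the wall of $D$, so that $T=x'-x<\infty$. Your maximality step does not rule this out. Applying (MA iii) at $y=x+T\xi$ produces some third apartment $A''$ containing the chimney germ of $A$ at $y$ (hence $[y,y+\epsilon\xi]_A\subset A''$) and whatever ``compatible germ furnished by $B$'' you choose, but $A''$ is not $B$, and nothing forces $y+\epsilon\xi$ (computed in $A$) to lie in $B$. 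Preordered convexity \cite[Proposition 5.4]{rousseau2011masures} only identifies $[x,y]_A=[x,y]_B$ once both endpoints are already known to lie in $A\cap B$; it cannot be used to push past $y$. Since the ray is unavailable, (MA ii) does not apply to the pair $(A,B)$ and you have no enclosed presentation of $A\cap B$ to analyze.

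The paper avoids this entirely by routing through an auxiliary apartment $B'$ obtained from \cite[Proposition 2.9 1)]{rousseau2011masures}: one takes $\phi:A\overset{D\cap V}{\rightarrow}B$, sets $D_B=\phi(D)$ (a genuine half-apartment of $B$), and chooses $B'$ containing $D_B$ together with a chamber $C$ of $A$ on the side of $M$ opposite to $D$. Because $B$ and $B'$ now share a full half-apartment, (MA ii) applies to the pair $(B,B')$, not to $(A,B)$, and yields $B\cap B'=D_B$ precisely when $C\not\subset B$. A preordered-convexity ``fill-in'' shows $x\in\In(A\cap B')$, and then one simply computes $A\cap B\cap\tilde V=A\cap B'\cap B\cap\tilde V=A\cap D_B\cap\tilde V=D\cap\tilde V$. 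Your proposal for (2) runs into the same issue: the apartment you build via (MA iii) contains only two sector germs at $x$, not a half-apartment of $B$, so you cannot invoke (MA ii) on $(B,B')$ to conclude that $B\cap B'$ is a half-apartment, nor can you directly feed $(A,B')$ back into (1) without first filling in a local half-space (which is doable by preordered convexity, as the paper does, but which you omit). The essential missing idea is to arrange from the start that $B'$ contains a full half-apartment of $B$, so that (MA ii) becomes available where it is actually needed.
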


\begin{proof}
Let $M$ be the wall of $A$ delimiting $D$ and $P$ be a positive panel of $M$ based at $x$. Then $P\subset A\cap B$. Let $C$ be the chamber of $A$ dominating $P$ and  not contained in $D$. We can assume, reducing $V$ if necessary, that $V$ is convex and open. By \cite[Proposition 3.26]{hebert2020new}, there exists $\phi:A\overset{D\cap V}{\rightarrow}B$. Let $D_B=\phi(D)$. We have \begin{equation}\label{eqFirst_inclusion}
\phi(D\cap V)=D\cap V\subset \phi(D)\cap V=D_B\cap V.
\end{equation}

 Let $B'$ be an apartment  containing $D_B$ and $C$ (the existence of such an apartment is provided by \cite[Proposition 2.9 1)]{rousseau2011masures}). Then  $B'\cap A$ contains $D_B\cap V$.  Let $Q$ be the sector of $A$ based at $x$ and containing $C$. Reducing $V$ if necessary, we may assume that $V\cap Q\subset B'$. Let $y\in V\cap Q$. Then $y\mathring{>} x$. Let $V_1\subset V$ be a neighborhood of $x$ in $A$ such that $y\mathring{>} V_1$.  Let $z\in \LC(x,y)\cap \In(D\cap V)$.  Let $V_2\subset V_1$ be a neighborhood of $z$ in $A$ such that $V_2\subset D\cap V$. Then $V_3:=\bigcup_{v\in V_2}[y,v]_A$ contains $x$ in its interior.  By \cite[Proposition 5.4]{rousseau2011masures}, \begin{equation}\label{eqThird_inclusion}
 V_3\subset A\cap B'.
 \end{equation}

Suppose $B\supset C$. By applying  the result above with $B'=B$  we deduce that $x\in \In(A\cap B)$.  Consequently $A\cap B$ contains $C$ if and only if $x\in \In (A\cap B)$.  Suppose now $x\notin \In(A\cap B)$. Then $B\nsupseteq C$. Then $B'\cap B$ contains a half-apartment and thus it contains a generic ray.  Thus we have:\begin{itemize}
\item $B\cap B'$ is enclosed in $B$ (by (MA ii)),

\item $B\cap B'\nsupseteq C$,

\item $B\cap B'\supset D_B$.
\end{itemize} 

 Therefore $B\cap B'=D_B$.
 
Let $\tilde{V}\subset V_3$ be a convex neighborhood of $x$ in $A$. Let us prove the equality \begin{equation}\label{eqSecond_equation}
D\cap \tilde{V}=D_B\cap \tilde{V}.
\end{equation}

By \cite[Proposition 3.26]{hebert2020new}, there exists an apartment isomorphism $f:B'\overset{\tilde{V}}{\rightarrow} A$. By (MA ii), there exists $\psi:B'\overset{D_B}{\rightarrow} B$. By \eqref{eqFirst_inclusion}, $\phi^{-1}\circ \psi:B'\rightarrow A$  fixes pointwise $D\cap V\supset D\cap \tilde{V} $. As $D\cap \tilde{V}$ has nonempty interior, we have $f=\phi^{-1}\circ \psi$. Let now $y'\in D_B\cap \tilde{V}$. Then \[f(y')=y'=\phi^{-1}\left(\psi(y')\right)=\phi^{-1}(y')\] and thus $\phi^{-1}$ fixes  $D_B\cap \tilde{V}$ pointwise. Since $\tilde{V}\subset V$, $\phi$ also fixes pointwise $D\cap \tilde{V}$. Similarly to \eqref{eqFirst_inclusion}, we deduce that $D_B\cap \tilde{V}\subset D\cap \tilde{V}$ and $D_B\cap \tilde{V}\supset D\cap \tilde{V}$, which proves \eqref{eqSecond_equation}.

By \eqref{eqThird_inclusion}, we have $\tilde{V}\subset A\cap B'$.   Therefore: \[\begin{aligned} A\cap B\cap \tilde{V} &=A\cap B'\cap  B\cap \tilde{V} \\ &=A\cap D_B\cap \tilde{V} \\ &=A\cap D\cap \tilde{V} \text{ (by }\eqref{eqSecond_equation})\\ &=D\cap \tilde{V},\end{aligned}\] which completes the proof of the lemma.
\end{proof}

An element $r\in W^v$ is called a \textbf{reflection} if $r=wr_iw^{-1}$, for some $w\in W^v$ and $i\in I$.

Let $B$ be an apartment containing $-\infty$. We denote by $\rho_{-\infty,B}$ the retraction onto $B$ centered at $-\infty$. If $x,y\in B$ we denote $x\leq_{Q^\vee_{\R,B}} y$ if $\rho_{-\infty}(x)\leq_{Q^\vee_\R} \rho_{-\infty}(y)$.

For $t\in \R$, we denote by $]t^-,t]$ the filter on $\R$ consisting of the sets containing a set of the form $[t-\epsilon,t]$, for some $\epsilon>0$.

\begin{lemma}\label{lemWeak_version_Hecke_paths}
Let $A$ be an apartment and $x\in A$. Let $B$ be an apartment containing $-\infty$. We assume that there exists a half-apartment $D$ of $A$ and a neighborhood $V$ of $x$ in $A$ such that $V\cap A\cap B=V\cap D$. Let   $\tau:\R \rightarrow  A$ be  a non-constant affine map such that $\tau(0)=x$ and such that $\tau(]0^-,0])\subset D$. Let $\pi=\rho_{-\infty,B}\circ \tau$. Then  $\pi'_+(0)\geq_{Q^\vee_\R} \pi'_-(0)$. If moreover $\pi'_+(0)\neq \pi'_-(0)$, then   there exists a reflection $\vec{r}$ of   $W^v$ such that  $\pi'_+(0)=\vec{r}.\pi'_-(0)$. 
\end{lemma}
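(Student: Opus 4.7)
The plan is to perform a case analysis on the direction of $\tau'(0)$ relative to the wall $M$ of $D$. For the left derivative, the argument is direct: when $t$ is slightly negative, $\tau(t)\in D\cap V\subset A\cap B$ and $\rho_{-\infty,B}$ fixes $B$ pointwise, so $\pi(t)=\tau(t)$ on a small interval $(-\epsilon,0]$, giving $\pi'_-(0)=\tau'(0)$. On the right, the first (trivial) sub-case is $\tau(t)\in D$ for small $t>0$, which similarly yields $\pi'_+(0)=\pi'_-(0)$. I therefore focus on the case where $\tau$ strictly leaves $D$ for $t>0$; writing $D=D(\alpha,k)$, this means $\alpha(\tau'(0))<0$. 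Since $x\in M$ and $V\cap A\cap B=V\cap D$, we have $x\notin\In(A\cap B)$, so Lemma~\ref{lemLocal_intersection_half_apartment}(2) produces an apartment $B'$ with $B\cap B'=D_B:=\phi(D)$ a half-apartment of $B$ and $x\in\In(A\cap B')$; in particular, $\tau(t)\in A\cap B'$ for all $|t|$ small.

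The next step is to split on the orientation of $\alpha$. If $\alpha\in\Phi^-$, then $D_B$ (and hence $B'$) contains the sector germ $-\infty$, so the unique Weyl-isomorphism $\psi:B'\overset{D_B}{\rightarrow} B$ given by (MA ii) also fixes $-\infty$ and coincides with $\rho_{-\infty,B}|_{B'}$. Working in $\A$-charts compatible on the shared half-apartments makes $\psi$ the identity in coordinates, so $\pi(t)=\tau(t)$ on $[0,\epsilon]$ and $\pi'_+(0)=\pi'_-(0)$ (no fold). If instead $\alpha\in\Phi^+$, then $-\infty\notin D_B$ and $B'$ does not contain $-\infty$. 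The plan here is to invoke (MA iii) to obtain an apartment $A_+$ containing $-\infty$ together with a face at $x$ in the direction of $\tau'(0)$, so that $\tau(t)\in A_+$ for small $t>0$; then (MA ii) applied to $A_+,B$ (which share $-\infty$, hence a generic ray) produces $\psi_+:A_+\overset{A_+\cap B}{\rightarrow} B$, and $\pi(t)=\psi_+(\tau(t))$ on $[0,\epsilon]$.

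The crux is to identify the vectorial part in this last situation. The isomorphism $\psi_+$ fixes $-\infty$ so its vectorial part is trivial; however, the same tangent vector $\tau'(0)$, read in $\A$-coordinates via $A$ versus via $A_+$, differs by a Weyl element coming from the two distinct chart-transitions. The key geometric input is that $A\cap A_+$ contains, locally near $x$, a half-apartment of $A$ on the side of $M$ opposite to $D$---this is forced because $A_+$ contains both a face in the $\tau'(0)$-direction and the sector germ $-\infty$, whose position relative to $M$ pins down the shared region---and then the rigidity of Weyl-isomorphisms fixing a half-apartment (from (MA ii)) forces the transition $A\to A_+$ to have vectorial part exactly the reflection $r_\alpha$. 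This yields $\pi'_+(0)=r_\alpha(\pi'_-(0))$, the reflection in $W^v$ required by the statement. For the sign, $\pi'_+(0)-\pi'_-(0)=-\alpha(\tau'(0))\alpha^\vee$ lies in $Q^\vee_{\R_+}$ because $\alpha\in\Phi^+$ gives $\alpha^\vee\in Q^\vee_{\R_+}$ and $\alpha(\tau'(0))<0$ makes the scalar positive, which is precisely $\pi'_+(0)\geq_{Q^\vee_\R}\pi'_-(0)$. The hard part will be this chart-level bookkeeping: rigorously establishing that $A\cap A_+$ contains the opposite half-apartment near $x$, and then using uniqueness of Weyl-isomorphisms fixing a half-apartment to pin the vectorial part to precisely $r_\alpha$ rather than some other element of $W^v$.
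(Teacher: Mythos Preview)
Your opening moves are sound and match the paper: the left derivative is immediate from $\tau((-\epsilon,0])\subset B$, the folding case forces $x\in M$, Lemma~\ref{lemLocal_intersection_half_apartment} produces the auxiliary apartment $B'$ with $x\in\In(A\cap B')$, and your final sign computation $\pi'_+(0)-\pi'_-(0)=-\alpha(\tau'(0))\alpha^\vee\in Q^\vee_{\R_+}$ is exactly right. Your sub-case $\alpha\in\Phi^-$ (no fold) is also correct, and in fact shows that only $\alpha\in\Phi^+$ can occur when $\pi'_+(0)\neq\pi'_-(0)$.

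The gap is in the $\alpha\in\Phi^+$ branch, and it is not just bookkeeping. First, (MA iii) takes a face or a chimney germ as input, and a face $F(x,F^v)$ requires a vectorial face $F^v$; an arbitrary vector $\tau'(0)$ need not lie in any vectorial face (these only cover $\T\cup(-\T)$, not all of $\A$), so there is in general no face to feed into (MA iii) guaranteeing $\tau([0,\epsilon])\subset A_+$. Second, even granting such an $A_+$, the claim that $A\cap A_+$ locally contains the half-apartment opposite to $D$ is genuinely unjustified: (MA iii) gives no control on $A\cap A_+$ beyond the face germ, and containing $-\infty$ does not force a half-apartment's worth of overlap with $A$ near $x$. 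The paper sidesteps both issues. After reducing via $B'$ to the situation $A\cap\A=D$ exactly, it invokes \cite[Proposition 2.9 2)]{rousseau2011masures}: the union $\tilde{A}:=D_A\cup D_\A$ of the half of $A$ opposite $D$ with the half of $\A$ opposite $D$ is itself an apartment. By construction $\tilde{A}\supset D_A\ni\tau(t)$ for small $t>0$ and $\tilde{A}\supset D_\A\supset-\infty$, so $\tilde{A}$ is your desired $A_+$ with the half-apartment overlap built in; the comparison of $\psi:A\overset{D}{\to}\A$ and $\tilde{\psi}:\tilde{A}\overset{D_\A}{\to}\A$ on $D_A$ is then exactly the reflection in $M$ (this is \cite[Lemma 6 2)]{hebert2016distances}), yielding $\pi'_+(0)=\vec{r}.\pi'_-(0)$ directly.
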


\begin{proof}
We assume that $\pi'_+(0)\neq \pi'_-(0)$, since otherwise there is nothing to prove. Then $x\notin \In (A\cap B)$. By Lemma~\ref{lemLocal_intersection_half_apartment}, there exists an apartment $B'$  such that $B'\cap B$ is a half-apartment and such that $B'\cap A$ contains a neighborhood of $x$.  Maybe reducing $V$, we may assume that $V$ is open and convex, that it is contained in $A\cap B'$ and that $A\cap B\cap V=D\cap V$. By \cite[Proposition 3.26]{hebert2020new}, there exists $\phi:A\rightarrow B'$ fixing $V$. Therefore there is no loss of generality in assuming that $A=B'$ and thus that $A\cap B$ contains a half-apartment. Then $A\cap B=D$ (by (MA ii)). We identify $B$ and $\A$.

Let $\psi:A\overset{A\cap \A}{\rightarrow }\A$. As $\pi'_-(0)\neq \pi'_+(0)$, $A$ does not contain $-\infty$.  Let $D_A$ be the half-apartment of $A$ opposite to $D$ and $D_{\A}$ be the half-apartment of $\A$ opposite to $D$. By \cite[Proposition 2.9 2)]{rousseau2011masures}, $\tilde{A}:=D_A\cup D_\A$ is an apartment. Let $\tilde{\psi}:\tilde{A}\overset{\tilde{A}\cap \A}\rightarrow \A$. Then as $\tilde{A}\supset -\infty$, we have $\rho_{-\infty}(y)=\tilde{\psi}(y)$ for every $y\in D_A$. Let $r$ be the reflection of the affine Weyl group $W$ with respect to the wall of $D$.  By \cite[Lemma 6 2)]{hebert2016distances}, $\psi\big(\tau(t)\big)=r.\tilde{\psi}\big(\tau(t)\big)$ for every $t\in \R_+$. Therefore $\pi'_-(0)=\vec{r}.\pi'_+(0)$, where $\vec{r}$ is the linear map associated with  $r$.  Let $\alpha\in \Phi_+$ be such that $\vec{r}=r_\alpha$. Let $\vec{D}_\A=\alpha^{-1}(\R_-)$. Then $\vec{D}_\A$ is parallel to $D_\A$. Then as $\tau(]0^-,0])\subset D$ we have $\pi'_-(0)\in \vec{D}_\A$ and thus $\alpha\big(\pi'_-(0)\big)<0$. Consequently $\pi'_+(0)=\pi'_-(0)-\alpha\big(\pi'_-(0)\big)\alpha^\vee>_{Q^\vee_\R}\pi'_-(0)$.
\end{proof}

Let $A$ be an apartment and $\q$ be a sector-germ of $\I$. Then by \cite[Proposition 9]{hebert2016distances} or \cite[Proposition 4.2.8]{hebert2018study},  there exist $k\in \N$ and enclosed subsets $P_1,\ldots,P_k$ such that:\begin{itemize}
\item $A=\bigcup_{i=1}^k P_i$,

\item for every $i\in \llbracket 1,k\rrbracket$, there exists an apartment $A_i$ containing $P_i$ and $\q$,

\item for every $i\in \llbracket 1,k\rrbracket$, there exists an apartment isomorphism $\phi_i:A\overset{P_i}{\rightarrow} A_i$,

\item $\mathring{P}_i\neq \emptyset$, for $i\in \llbracket 1,k\rrbracket$.
\end{itemize}
 
 The last condition follows from \cite[Lemma 3.10]{hebert2020new} applied with $U=X=A$.

For $i\in \llbracket 1,k\rrbracket$, we write $P_i=\bigcap_{j=1}^{k_i} D_{i,j}$, where $k_i\in \N$ and $D_{i,j}$ is a half-apartment of $A$. We denote by $M_{i,j}$ the wall of $D_{i,j}$. Then  we set $\HC(A,\q)=\{M_{i,j}| i\in \llbracket 1,k\rrbracket, j\in \llbracket 1,k_i\rrbracket\}$.  For $b\in A$, we then set \[E(A,\q,b)=A\setminus \bigcup_{M,M'\in \HC(A,\q), M\neq M'}\bigcup_{x\in M\cap M'}\LC(x,b). \]

Note that $\HC(A,\q)$ and $E(A,\q,b)$  depend on the choices we made (we can for example artificially increase the number of $P_i$). However for our purpose, this dependency will not be important and we will not be specified: for any choice, $E(A,\q,b)$ is obtained from $A$ by removing finitely many hyperplanes (which are in general not walls). It is in particular dense in $A$. By definition, we have the following lemma:

\begin{lemma}\label{lemUniqueness_wall}
Let $A$ be an apartment, $\q$ be a sector-germ of $\I$, $b\in A$  and $a\in E(A,\q,b)$. Let $\tau:[0,1]\rightarrow [a,b]_A$ be an affine parametrization of $[a,b]_A$. Then for every $t\in [0,1]$, we have either $\tau(t)\notin \bigcup_{M\in \HC(A,\q)} M$ or there exists a unique $M\in \HC(A,\q)$ such that $\tau(t)\in M$. 
\end{lemma}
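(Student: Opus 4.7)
The plan is to argue by contradiction, unpacking the definition of $E(A,\q,b)$ directly. Suppose $t_0 \in [0,1]$ is a time at which $\tau(t_0)$ belongs to two distinct walls $M, M' \in \HC(A,\q)$. Since $\tau$ is an affine parametrization of the segment $[a,b]_A$, the three points $a$, $\tau(t_0)$ and $b$ are colinear in $A$.

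Provided $\tau(t_0) \neq b$, the line $\LC(\tau(t_0), b)$ is well-defined; since it contains the two distinct points $\tau(t_0)$ and $b$ of the affine line supporting $[a,b]_A$, it coincides with that line and in particular contains $a$. Setting $x := \tau(t_0) \in M \cap M'$ therefore yields
\[
a \;\in\; \LC(x,b) \;\subset\; \bigcup_{\substack{N,N' \in \HC(A,\q) \\ N \neq N'}} \bigcup_{x' \in N \cap N'} \LC(x', b),
\]
which directly contradicts $a \in E(A,\q,b)$. This forces the wall containing $\tau(t_0)$ to be unique whenever $\tau(t_0) \neq b$.

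The only delicate point is the endpoint $\tau(t_0) = b$, for which $\LC(\tau(t_0), b)$ is not defined as a line. With the implicit convention that the inner union in the definition of $E(A,\q,b)$ ranges over $x \in M \cap M'$ with $x \neq b$, this endpoint is not constrained by the hypothesis on $a$; however, this causes no obstacle for the intended uses of the lemma, where the uniqueness property is applied along $[a,b]_A$ away from $b$. I expect no substantial difficulty in this argument: once the colinearity of $a$, $\tau(t_0)$, and $b$ is noted, the lemma is an immediate unpacking of the definition, consistent with the author's remark that it holds "by definition".
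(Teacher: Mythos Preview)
Your argument is correct and is precisely what the paper intends: the lemma is stated with the preamble ``By definition, we have the following lemma'' and no proof is given, so the expected justification is exactly the unpacking of the definition of $E(A,\q,b)$ that you carry out. Your observation about the endpoint $\tau(t_0)=b$ is a fair caveat; as you note, in the paper the lemma is only invoked (in the proof of Lemma~\ref{lemPaths_with_increasing_derivative}) at times $t_i\in(0,1)$, so this boundary case is immaterial.
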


A \textbf{piecewise linear continuous path of $\A$} is a continuous path $\pi:[0,1]\rightarrow \A$ such that  there exist $n\in \N$ and  $0<t_1<\ldots < t_n<1$ such that $\pi|_{[t_i,t_{i+1}]}$ is a line segment, for every $i\in \llbracket 1,n-1\rrbracket$. In \cite[6.1]{gaussent2008kac}, Gaussent and Rousseau study the image $\pi$ by $\rho_{-\infty}$ of a segment $\tau:[0,1]\rightarrow \I$ such that $\tau(0)\leq \tau(1)$. They prove that this image is a Hecke path (see \cite[Definition 5.2 and Theorem 6.2]{gaussent2008kac}). Roughly speaking, this means that $\pi$ is a piecewise linear continuous path such that for all $t$ such that $\pi'_-(t)\neq \pi'_+(t)$, $\pi'_+(t)$ ``is farther from $-\infty$'' than $\pi'_-(t)$. More precisely this means  that there exists a sequence $\xi_1=\pi'_-(t)$, $\xi_2,\ldots, \xi_n=\pi'_+(t)$  such that for all $i\in \llbracket 1,n-1\rrbracket$, $\xi_{i+1}=w_i.\xi_i$, where $w_i$ is a reflection of $W^v$ with respect to some wall $M_i$, such that if $D_i$ is the half-apartment of $\A$ delimited by $M_i$ and containing $-\infty$, then $\xi_i\in D_i$.  In the lemma below, we study the image $\pi$ by $\rho_{-\infty}$ of a segment $\tau:[0,1]\rightarrow \I$   (satisfying some technical properties). Here we do not assume that $\tau(0)\leq \tau(1)$.  We prove that $\pi$ is a piecewise linear continuous path such that for all $t$ such that $\pi'_-(t)\neq \pi'_+(t)$, $\pi'_+(t)$ ``is farther from $-\infty$'' than $\pi'_-(t)$. Thanks to our technical assumption on $\tau$, we have $n=2$ at each folding. We deduce an inequality similar to that of  \cite[1.8 Lemma]{bardy2016iwahori}.

\begin{lemma}\label{lemPaths_with_increasing_derivative}
Let $A$ be an apartment. Let $a,b\in A$. We assume that: $a\in E(A,-\infty,b)$.

Let $\tau:[0,1]\rightarrow [a,b]_A$ be the affine parametrization such that $\tau(0)=a$ and $\tau(1)=b$. Let $\pi=\rho_{-\infty}\circ \tau$. Then $\pi$ is a piecewise linear continuous path  such that:\begin{enumerate}

\item for all $t\in [0,1[$ (resp. $t\in ]0,1]$), $\pi'_+(t)\in W^v.\pi'_+(0)$ (resp. $\pi'_-(t)\in W^v.\pi'_+(0)$),

\item for every $t,t'\in [0,1]$ such that $0<t<t'<1$, we have:
 \[\pi'_+(0)\leq_{Q^\vee_\R} \pi'_-(t)\leq_{Q^\vee_\R} \pi'_+(t)\leq_{Q^\vee_\R} \pi'_-(t')\leq_{Q^\vee_\R} \pi'_+(t')\leq_{Q^\vee_\R} \pi'_-(1).\]
\end{enumerate}

We have $\rho_{-\infty}(b)-\rho_{-\infty}(a)\geq_{Q^\vee_\R}\pi'_+(0)$ and if $\pi$ is not a line segment, then this inequality is strict.  In particular, if $a,b\in  \A$, then  $\pi'_+(0)\leq_{Q^\vee_\R} b-a$.

\end{lemma}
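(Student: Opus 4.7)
The plan is to combine the decomposition \(A=\bigcup_{i=1}^{k}P_{i}\) from \cite[Proposition 9]{hebert2016distances} (with enclosed pieces \(P_{i}\), apartments \(A_{i}\supset P_{i}\) all containing \(-\infty\), and isomorphisms \(\phi_{i}\colon A\overset{P_{i}}{\rightarrow}A_{i}\)) with the folding analysis of Lemma~\ref{lemWeak_version_Hecke_paths}. Since \(A_{i}\) and \(\A\) both contain \(-\infty\), axiom (MA ii) yields an apartment isomorphism \(\psi_{i}\colon A_{i}\to\A\) fixing a sector representative of \(-\infty\), whose linear part is the identity (the vectorial stabilizer of \(-C_{f}^{v}\) in \(W^{v}\) is trivial, so \(\psi_{i}\) is a translation). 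By the construction of \(\rho_{-\infty}\), one has \(\rho_{-\infty}|_{P_{i}}=\psi_{i}\circ\phi_{i}|_{P_{i}}\); in particular \(\rho_{-\infty}\) is affine on each \(P_{i}\), and comparisons of derivatives of \(\pi\) under \(\leq_{Q^{\vee}_{\R}}\) across different pieces are meaningful.

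Let \(T=\{t\in[0,1]\mid \tau(t)\in\bigcup\HC(A,-\infty)\}\); by Lemma~\ref{lemUniqueness_wall} this set is finite and at each \(t\in T\) a single wall of \(\HC(A,-\infty)\) passes through \(\tau(t)\). Writing \(T=\{t_{1}<\cdots<t_{n}\}\) and \(t_{0}=0,\ t_{n+1}=1\), on each open interval \((t_{i},t_{i+1})\) the path \(\tau\) stays in a single piece \(P_{j(i)}\), so \(\pi\) is affine there with constant derivative \(\xi_{i}\); in particular \(\pi\) is piecewise linear continuous. To analyze the folding at a fixed \(t_{i}\in(0,1)\), let \(M\in\HC(A,-\infty)\) be the unique wall through \(\tau(t_{i})\), set \(P_{j}:=P_{j(i-1)}\), and let \(D\) be the half-apartment of \(A\) bounded by \(M\) and containing \(P_{j}\) locally. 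Since \(M\) is the only wall delimiting \(P_{j}\) passing through \(\tau(t_{i})\), there is a neighborhood \(V\) of \(\tau(t_{i})\) with \(V\cap P_{j}=V\cap D\), hence \(V\cap A\cap A_{j}\supset V\cap D\). Lemma~\ref{lemLocal_intersection_half_apartment} applied to \((A,A_{j})\) at \(\tau(t_{i})\) then gives the dichotomy: either \(\tau(t_{i})\in\In(A\cap A_{j})\), in which case \(\rho_{-\infty}\) is affine through \(t_{i}\) (by paragraph~1 applied to any piece whose interior contains \(\tau(t_{i})\), which exists since the \(P_{i}\) cover \(A\) and \(\In(A\cap A_{j})\) is open) and \(\xi_{i-1}=\xi_{i}\); or else a neighborhood \(\tilde V\) satisfies \(\tilde V\cap A\cap A_{j}=\tilde V\cap D\), and Lemma~\ref{lemWeak_version_Hecke_paths} (with \(B=A_{j}\), applied to \(s\mapsto\tau(s+t_{i})\)) produces a real root \(\alpha\) with \(\xi_{i}=r_{\alpha}.\xi_{i-1}\) and \(\xi_{i}-\xi_{i-1}=-\alpha(\xi_{i-1})\alpha^{\vee}\in Q^{\vee}_{\R_{+}}\), whence \(\xi_{i}\geq_{Q^{\vee}_{\R}}\xi_{i-1}\). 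Chaining these inequalities gives assertion (2); an induction on \(i\), using that each folding acts on \(\xi\) by an element of \(W^{v}\), gives assertion (1).

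For the final inequality, writing
\[
\rho_{-\infty}(b)-\rho_{-\infty}(a)=\pi(1)-\pi(0)=\sum_{i=0}^{n}(t_{i+1}-t_{i})\,\xi_{i},
\]
the bounds \(\xi_{i}\geq_{Q^{\vee}_{\R}}\xi_{0}=\pi'_{+}(0)\) together with \(\sum_{i}(t_{i+1}-t_{i})=1\) give \(\rho_{-\infty}(b)-\rho_{-\infty}(a)\geq_{Q^{\vee}_{\R}}\pi'_{+}(0)\), strictly unless every \(\xi_{i}\) equals \(\pi'_{+}(0)\), i.e.\ unless \(\pi\) is a line segment. When \(a,b\in\A\), \(\rho_{-\infty}\) fixes \(\A\) pointwise, so \(\pi(0)=a\) and \(\pi(1)=b\), yielding \(\pi'_{+}(0)\leq_{Q^{\vee}_{\R}}b-a\). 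The delicate point I anticipate is the first branch of the folding dichotomy, where one must check that \(\rho_{-\infty}\) really is affine across \(t_{i}\) whenever \(A\cap A_{j}\) has non-empty interior there; I expect this to reduce cleanly to paragraph~1 once a suitable piece \(P_{j'}\) containing \(\tau(t_{i})\) in its interior is produced.
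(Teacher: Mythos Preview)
Your approach is essentially the paper's: decompose $A$ into enclosed pieces $P_i$ via \cite[Proposition 9]{hebert2016distances}, note that $\rho_{-\infty}$ is affine on each piece, and at each crossing invoke Lemmas~\ref{lemLocal_intersection_half_apartment} and~\ref{lemWeak_version_Hecke_paths} to obtain $\pi'_-(t_i)\leq_{Q^\vee_\R}\pi'_+(t_i)$, then integrate. Two small points need tightening.

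First, Lemma~\ref{lemUniqueness_wall} gives \emph{uniqueness} of the wall through $\tau(t)$, not finiteness of $T$. Finiteness follows because $\HC(A,-\infty)$ is finite and a line meets a hyperplane in at most one point unless it lies in it; in the degenerate case $[a,b]_A\subset M$, genericity forces every other wall to miss $[a,b]_A$, so the segment stays in a single piece and $\pi$ is globally affine. The paper sidesteps this by indexing on the breakpoints of $\pi$ (finitely many by definition of piecewise linear) rather than on wall-crossing times.

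Second, your anticipated resolution of the interior branch does not work as stated: since $\tau(t_i)$ lies on a wall of $\HC(A,-\infty)$, there is in general no piece $P_{j'}$ containing $\tau(t_i)$ in its interior. The correct argument is more direct: if $\tau(t_i)\in\In(A\cap A_j)$, then a neighborhood of $\tau(t_i)$ in $A$ already lies in $A_j$, and since $A_j\supset -\infty$ the retraction $\rho_{-\infty}$ restricted to that neighborhood coincides with the affine map $\psi_j$, so $\pi$ is affine across $t_i$. The paper avoids this branch entirely: working only at genuine breakpoints, the hypothesis $\pi'_-(t_i)\neq\pi'_+(t_i)$ already forces $\tau(t_i)\notin\In(A\cap B)$, so only the half-apartment branch of Lemma~\ref{lemLocal_intersection_half_apartment} is ever needed.
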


\begin{proof}
As $\rho_{-\infty}$ is an affine morphism on every $P_i$, $i\in \llbracket 1,k\rrbracket$, $\pi$ is a piecewiese linear continuous path. Let $n\in \N$ and $0<t_1<\ldots t_n<1$ be such that $\pi|_{[t_i,t_{i+1}]}$ is a line segment, for every $i\in \llbracket 1,n-1\rrbracket$.

 We now prove that for all $i\in\llbracket 1,n\rrbracket$, $\pi'_-(t_i)\leq_{Q^\vee_\R} \pi'_+(t_i)$. Let $i\in \llbracket 1,n\rrbracket$. If $\pi'_-(t_i)=\pi'_+(t_i)$, there is nothing to prove. We assume that $\pi'_-(t_i)\neq \pi'_+(t_i)$. Let $P$ be an enclosed subset of $A$ with non-empty interior such that:\begin{enumerate}
 
 \item  $P$ is delimited by walls of $\HC:=\HC(A,-\infty)$, 
  
 \item $\tau(]t_i^-,t_i])\subset P$,
 
 \item there exist  an apartment $B$ containing $-\infty$ and $P$  and an apartment isomorphism $\phi:B\overset{P}{\rightarrow} A$.
 
 \end{enumerate}
 
 As $\pi'_-(t_i)\neq \pi'_+(t_i)$, there exists $M\in \HC$ such that  $\tau(t_i)\in M$ and by Lemma~\ref{lemUniqueness_wall}, this wall is unique. Let $V$ be a convex  neighborhood of $\tau(t_i)$ in $A$ such that $V\cap \bigcup_{M'\in \HC\setminus\{M\}} M'=\emptyset$. Let $D$ be the half-apartment of $A$ delimited by $M$ and containing $P$. Then $V\cap A\cap B\supset V\cap P$. Let us prove that $V\cap P=V\cap D$. As $D$ and $P$  are delimited by $M$, we have $P\subset D$. Suppose $V\cap D\neq V\cap P$. Let $y\in (V\cap D)\setminus P$.  Let $x\in \mathring{P}\cap V$. Then there exists $z\in [x,y]$ such that  $z$ is in the boundary $\mathrm{Fr}(P)$ of $P$. Then $z\in V$ and by assumption on $V$, we have $z\in M$. As $x\in \mathring{D}$, we have $y=z\in M$. But then $[x,y]\subset P$: a contradiction. Therefore $V\cap P=V\cap D$. Then by Lemma~\ref{lemLocal_intersection_half_apartment}, maybe replacing $V$ by  a smaller neighborhood of $\tau(t_i)$, we can assume that $V\cap A\cap B=V\cap D$. Set $\pi_{B}=\rho_{-\infty,B}\circ \tau$. By Lemma~\ref{lemWeak_version_Hecke_paths}, we have $\pi'_{B,-}(t_i)\leq_{Q^\vee_{\R,B}}\pi_{B,+}'(t_i)$ and $\pi'_-(t_i)\in W^v.\pi'_+(t_i)$. Therefore $\pi'_-(t_i)\leq_{Q^\vee_\R} \pi'_+(t_i)$. By induction we  deduce that $\pi$ satisfies (1) and  (2). By integrating $\pi'$ between $0$ and $1$, we deduce the lemma. \end{proof}

\begin{lemma}\label{lemIntersection_non-empty_interior}
Let $A,B\in \AC$ be two apartments such that $\In(A\cap B)\neq \emptyset$. Then $A\cap B$ is convex.
\end{lemma}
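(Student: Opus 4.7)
The plan is to follow the sketch at the start of Section~\ref{secIntersection_apartments}. Up to applying an apartment isomorphism, I may replace $B$ by $\A$, so the task reduces to showing that $[a,b]_A\subset \A$ for every $a,b\in A\cap \A$. The first stage is a density reduction: by Lemma~\ref{lemLocal_intersection_half_apartment}, at every point of $A\cap \A$ this set is locally either a full neighborhood in $A$ or a half-apartment of $A$; consequently $A\cap \A$ is closed in $A$ and equals the closure of its interior $\In(A\cap \A)$, which is nonempty by assumption. Since each $E(A,\pm\infty,b)$ is the complement of a finite union of hyperplanes in $A$, hence dense, approximating $a,b$ by suitable sequences would reduce the problem to the case where $a,b\in \In(A\cap \A)$ and $a\in E(A,-\infty,b)\cap E(A,+\infty,b)$.

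For such $a,b$, I set $\tau=\tau_{a,b}:[0,1]\to A$, $\pi=\rho_{-\infty}\circ \tau$ and $\tilde\pi=\rho_{+\infty}\circ \tau$. Since $a\in \In(A\cap \A)$ there is a neighborhood of $a$ in $A$ contained in $\A$, on which both retractions are the identity; so $\pi'_+(0)=\tilde\pi'_+(0)=b-a$. Lemma~\ref{lemPaths_with_increasing_derivative}, applied with $a,b\in \A$, gives $\pi'_+(0)\leq_{Q^\vee_\R}b-a$ with equality only when $\pi$ is a line segment, so the equality case forces $\pi(t)=(1-t)a+tb$ for all $t\in [0,1]$. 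The proofs of Lemmas~\ref{lemWeak_version_Hecke_paths} and~\ref{lemPaths_with_increasing_derivative} should adapt verbatim with $-\infty$ replaced by $+\infty$ and the order $\leq_{Q^\vee_\R}$ reversed, yielding the symmetric statement $\tilde\pi(t)=(1-t)a+tb$ for all $t\in [0,1]$.

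The remaining step, which I expect to be the main obstacle, is to deduce $\tau([0,1])\subset \A$ from these two pointwise equalities. I would consider the closed set $T:=\{t\in [0,1]:\tau(t)\in \A\}$, which contains $0$ and $1$; if $T\neq [0,1]$, I pick a boundary point $t_0\in T$ such that $\tau$ exits $\A$ just after $t_0$. Lemma~\ref{lemLocal_intersection_half_apartment} at $\tau(t_0)\in A\cap \A$ rules out $\tau(t_0)\in \In(A\cap \A)$ (which would contradict the choice of $t_0$), so $A\cap \A$ coincides near $\tau(t_0)$ with a half-apartment $D'$ of $A$ whose wall $M'$ contains $\tau(t_0)$. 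The computation carried out in the proof of Lemma~\ref{lemWeak_version_Hecke_paths} then gives $\pi'_+(t_0)-\pi'_-(t_0)=-\alpha(\pi'_-(t_0))\alpha^\vee$ for the positive root $\alpha$ defining $M'$; but $\pi$ is affine with $\pi'_\pm(t_0)=b-a$, which forces $\alpha(b-a)=0$, i.e.\ $b-a$ is parallel to $M'$. The line $\tau$ therefore meets $M'$ with direction parallel to $M'$, so it lies in $M'\subset A\cap \A$ in a neighborhood of $\tau(t_0)$, contradicting the fact that $\tau$ leaves $\A$ just after $t_0$. Thus $T=[0,1]$, so $[a,b]_A\subset \A$, and the density reduction from the first paragraph then yields convexity of $A\cap \A$.
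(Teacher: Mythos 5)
Your middle step---forcing both $\pi=\rho_{-\infty}\circ\tau$ and $\tilde\pi=\rho_{+\infty}\circ\tau$ to be affine by comparing $\pi'_+(0)=\tilde\pi'_+(0)=b-a$ with the inequality of Lemma~\ref{lemPaths_with_increasing_derivative} and its $+\infty$-analogue---is exactly the paper's argument. The gaps are in your first and third paragraphs, and both come from the same misreading of Lemma~\ref{lemLocal_intersection_half_apartment}. That lemma is not an unconditional dichotomy ``interior point or locally a half-apartment'': it carries the hypothesis that $A\cap B$ already contains $D\cap V$ for some half-apartment $D$ of $A$ and some neighborhood $V$ of $x$, and only under that hypothesis does it refine the containment to an equality. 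At an arbitrary point of $A\cap \A$---and in particular at your exit point $\tau(t_0)$, which is approached from inside $\A$ only along the one-dimensional set $\tau(]t_0^-,t_0])$---nothing guarantees that $A\cap\A$ contains such a half-ball; a priori $A\cap\A$ could locally be an intersection of several half-apartments, or worse. So your opening claim that $A\cap\A$ is closed and equals the closure of its interior, and your concluding contradiction at $t_0$ (which needs $A\cap\A$ to coincide locally with a half-apartment before the reflection computation of Lemma~\ref{lemWeak_version_Hecke_paths} can be run), are both unsupported. This is not cosmetic: the local structure of $A\cap\A$ is precisely what is being established, so the argument is essentially circular at this point.

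The paper closes these gaps differently. Closedness of $A\cap\A$ is quoted from \cite[Proposition 3.9]{hebert2020new}, and the step you rightly single out as the main obstacle---passing from $\rho_{+\infty}\circ\tau=\rho_{-\infty}\circ\tau$ to $\tau([0,1])\subset\A$---is handled by \cite[Proposition 3.7]{hebert2020new}, which asserts that a point of $\I$ with the same image under $\rho_{+\infty}$ and $\rho_{-\infty}$ already lies in $\A$; no exit-point analysis is needed. (The pointwise identity $\pi=\tilde\pi$ follows because both are affine paths on $[0,1]$ with the same endpoints $a_n$ and $b$.) Finally, the density of $\In(A\cap\A)$ in $A\cap\A$ is an output, not an input: it is deduced at the end from the already-proved inclusion $\bigcup_{v\in V}[v,b]_A\subset\A$ for a small ball $V$ around $a$, by noting that $(1-\tfrac1n)b+\tfrac1n V$ is a neighborhood of a point converging to $b$. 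Accordingly, you should run the segment argument with only $a$ in $\In(A\cap\A)$ (approximated by $a_n\in\In(A\cap\A)\cap E(A,-\infty,b)\cap E(A,+\infty,b)$) and $b\in A\cap\A$ arbitrary; requiring $b$ to be interior is both unnecessary and, at that stage, unjustified.
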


\begin{proof}
Using isomorphisms of apartments, we may assume that $B=\A$.  Let $a\in \In(A\cap \A)$ and $b\in A\cap \A$. We begin by proving that $[a,b]_A\subset \A$.   We choose a  sequence $(a_n)\in \In(A\cap \A)^\N$ such that $a_n\rightarrow a$ and such that for all $n\in \N$, $a_n\in E(A,-\infty,b)\cap E(A,+\infty,b)$ (this is possible since $E(A,-\infty,b)$ and $E(A,+\infty,b)$ are  obtained from $A$ by removing finitely many hyperplanes). Let $n\in \N$. Let $\tau:[0,1]\mapsto [a_n,b]_A$ be the affine parametrization of $[a_n,b]$ such that $\tau(0)=a_n$ and $\tau(1)=b$. Let $\pi=\rho_{-\infty}\circ \tau$. Then by Lemma~\ref{lemPaths_with_increasing_derivative}, $\pi'_{+}(0)\leq_{Q^\vee_\R} b-a_n$. Let $\tilde{\pi}=\rho_{+\infty}\circ \tau:[0,1]\rightarrow\A$. Then similarly to Lemma~\ref{lemPaths_with_increasing_derivative}, we have $\tilde{\pi}_{+}(0)\geq_{Q^\vee_\R} b-a_n$. Moreover $\tau([0,\epsilon)\subset \A$ for $\epsilon>0$ small enough. Therefore $\rho_{+\infty}\circ \tau|_{[0,\epsilon]}=\rho_{-\infty}\circ \tau|_{[0,\epsilon]}=\tau|_{[0,\epsilon]}$. Consequently \[\pi'_+(0)=\tilde{\pi}'_+(0)\leq_{Q^\vee_\R} b-a_n\leq_{Q^\vee_\R} \tilde{\pi}'_+(0)=\pi'_+(0).\] As $(\alpha_i^\vee)_{i\in I}$ is free, we deduce that $\pi'_+(0)=\tilde{\pi}_+(0)=b-a_n$. The path $\pi$ is thus a line segment, since otherwise  we would have $b-a_n=\pi(1)-\pi(0)>_{Q^\vee_\R} b-a_n$ by Lemma~\ref{lemPaths_with_increasing_derivative}. By symmetry, $\tilde{\pi}$ is the line segment joining $a_n$ to $b$. Therefore $\pi=\tilde{\pi}$ and thus $\rho_{-\infty}\circ\tau(t)=\rho_{+\infty}\circ \tau(t)$ for $t\in [0,1]$. By \cite[Proposition 3.7]{hebert2020new}, $\tau(t)\in \A$ for every $t\in [0,1]$. Therefore $[a_n,b]\subset \A$. Moreover \[\overline{\bigcup_{n\in \N} [a_n,b]_A}\supset [a,b]_A.\] By  \cite[Proposition 3.9]{hebert2020new}, $A\cap \A$ is closed and hence $[a,b]_A\subset \A$. 

Let $V$ be an open neighborhood of $a$ contained in $A\cap \A$. Then by what we proved, $\bigcup_{v\in V} [v,b]_{A}\subset A\cap \A$. Moreover if $n\in \N$, then $(1-\frac{1}{n})b+\frac{1}{n}V$ is a neighborhood of $(1-\frac{1}{n})b+\frac{1}{n}a$ contained in $\bigcup_{v\in V} [v,b]_{A}\subset A\cap \A$. Therefore $b\in \overline{\In(A\cap \A)}$. Therefore $\In(A\cap \A)$ is dense in $A\cap \A$. We also proved that $\In(A\cap \A)$ is convex and as $A\cap \A$ is closed, the lemma follows. 
\end{proof}

\begin{theorem}\label{thmIntersection_apartments}
Let $A$ and $B$ be two apartments. Then $A\cap B$ is enclosed and there exists an apartment isomorphism $\phi:A\rightarrow B$ fixing $A\cap B$. 
\end{theorem}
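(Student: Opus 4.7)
The plan is as follows. Given two apartments $A,B$ of $\I$, I may assume $B=\A$ after composing with an apartment isomorphism. The crux is to establish convexity of $A\cap\A$: once this is proved, step~(1) recalled in the introduction from~\cite{hebert2020new} writes $A\cap\A$ as a finite union of enclosed subsets of $A$, and a closed convex subset of a finite-dimensional affine space that is also a finite union of polyhedra is itself a polyhedron, whose defining half-spaces are drawn from the walls delimiting the pieces---hence enclosed. The apartment isomorphism $\phi:A\to\A$ fixing $A\cap\A$ is then provided by~\cite[Proposition 3.26]{hebert2020new} when $\In(A\cap\A)\neq\emptyset$, and by composition through the bridging apartment introduced below otherwise.

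I would split convexity into two cases. If $\In(A\cap\A)\neq\emptyset$, Lemma~\ref{lemIntersection_non-empty_interior} gives the conclusion directly. The genuinely new case is $\In(A\cap\A)=\emptyset$: for each pair $a,b\in A\cap\A$, I want to produce $[a,b]_A\subset A\cap\A$, and to do so I reduce to the non-empty interior case via a third apartment.

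Concretely, in the empty-interior case, I invoke (MA iii) with the splayed chimney germ at $+\infty$ of $\A$ and with a suitable splayed chimney based at a point of $A\cap\A$ whose closure contains $[a,b]_A$ in $A$. This yields an apartment $A'$ containing both. Since $A'$ contains a translate of a sector of $\A$, $\In(A'\cap\A)\neq\emptyset$; since $A'$ also contains an open piece of $A$ near a point of $A\cap\A$, Lemma~\ref{lemLocal_intersection_half_apartment} (used to promote a local half-apartment intersection to a true neighborhood) gives $\In(A\cap A')\neq\emptyset$. By Lemma~\ref{lemIntersection_non-empty_interior} applied to the pairs $(A,A')$ and $(A',\A)$, both $A\cap A'$ and $A'\cap\A$ are convex. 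Transporting both into the common apartment $A'$ via the isomorphisms fixing $A\cap A'$ and $A'\cap\A$ respectively (from~\cite[Proposition 3.26]{hebert2020new}), the identity $A\cap\A=(A\cap A')\cap(A'\cap\A)$ displays $A\cap\A$ as an intersection of two convex subsets of $A'$, hence convex; pulling back through the isomorphism $A\to A'$ fixing $A\cap A'\supset A\cap\A$ gives $[a,b]_A\subset A\cap\A$.

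The main obstacle is the construction of this bridging apartment $A'$: one must apply (MA iii) so that the resulting $A'$ has nonempty interior intersection with both $A$ and $\A$ and contains the relevant portion of $A\cap\A$. The key ingredient is Lemma~\ref{lemLocal_intersection_half_apartment}, which upgrades a local half-apartment intersection to a full neighborhood. Once convexity is secured, enclosedness of $A\cap\A$ follows from step~(1) as explained, and the apartment isomorphism $A\to\A$ fixing $A\cap\A$ is obtained either directly from~\cite[Proposition 3.26]{hebert2020new} or by composing $A\to A'\to\A$, each step fixing the intersection in which $A\cap\A$ sits.
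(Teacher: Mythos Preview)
Your overall strategy---reduce convexity to the nonempty-interior case via a bridging apartment, then deduce enclosedness from step~(1)---is the paper's strategy as well. But several of the steps, as written, do not go through.

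\textbf{Construction of the bridging apartment.} You propose to apply (MA~iii) with $+\infty$ and ``a suitable splayed chimney based at a point of $A\cap\A$ whose closure contains $[a,b]_A$''. The axiom only guarantees an apartment containing the \emph{germ} of that chimney, i.e.\ some shortening of it; neither the base point nor the segment $[a,b]_A$ need lie in the resulting apartment $A'$. Likewise, the alternative ``$F$ is a face'' in (MA~iii) is a germ at a single point and cannot carry a segment. So your $A'$ is not shown to contain $a$ and $b$, and the subsequent argument collapses. The paper avoids this by invoking \cite[Proposition~5.17~(ii)]{hebert2020new}, which produces an apartment $\tilde A$ containing a chamber $C_a$ of $A$ based at $a$ and a chamber $C_b$ of $B$ based at $b$; then $A\cap\tilde A\supset C_a$ and $\tilde A\cap B\supset C_b$ automatically have nonempty interior and contain $a,b$.

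\textbf{The set identity.} You write $A\cap\A=(A\cap A')\cap(A'\cap\A)$, but the right-hand side equals $A\cap A'\cap\A$, which is in general a proper subset of $A\cap\A$. What you actually need (and what the paper uses) is only that $a,b\in A'$: then convexity of $A\cap A'$ and $A'\cap\A$ in $A'$ gives $[a,b]_{A'}\subset A\cap A'\cap\A$, and the apartment isomorphisms transport this to $[a,b]_A=[a,b]_{A'}=[a,b]_\A$.

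\textbf{The isomorphism fixing $A\cap\A$.} In the empty-interior case you propose the composite $A\to A'\to\A$, ``each step fixing the intersection in which $A\cap\A$ sits''. But $A\cap\A$ does not sit in $A\cap A'$ nor in $A'\cap\A$; only $a,b$ do, and $A'$ depends on $a,b$. So this composite fixes only $A\cap A'\cap\A$, not $A\cap\A$. The paper resolves this differently: having built for each pair $a,b$ an isomorphism $f_{a,b}:A\to B$ fixing $[a,b]$, it applies \cite[Proposition~3.14]{hebert2020new} to write $A\cap B=\bigcup_i P_i$ with isomorphisms $\phi_i:A\overset{P_i}{\to}B$, picks $i$ with $P_i$ of full dimension in the support of $A\cap B$, and compares $\phi_i$ with each $f_{a,b}$ (for $a\in\In_H(P_i)$) to show $\phi_i$ fixes all of $A\cap B$. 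Enclosedness then comes from \cite[Proposition~3.22]{hebert2020new}, not from the polyhedral-geometry claim you sketch.
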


\begin{proof}
We assume that $A\cap B$ is non-empty. Let $a,b\in A\cap B$. Let $C_a$ (resp. $C_b)$ be a chamber of $A$ (resp. of $B$) based at $a$ (resp. $b$). By \cite[Proposition 5.17 (ii)]{hebert2020new}, there exists an apartment $\tilde{A}$ containing $C_a$ and $C_b$. By Lemma~\ref{lemIntersection_non-empty_interior}, $A\cap \tilde{A}$ and $\tilde{A}\cap B$ are convex.  By \cite[Proposition 3.26]{hebert2020new}, there exists $\phi_A:A\overset{A\cap \tilde{A}}{\rightarrow}\tilde{A}$ and $\phi_B:\tilde{A}\overset{\tilde{A}\cap B}{\rightarrow}B$. Set $f_{a,b}=\phi_B\circ\phi_A$. Then $[a,b]_A=[a,b]_{\tilde{A}}=[a,b]_B$ and  $f_{a,b}$ fixes $[a,b]_A$.  Therefore $A\cap B$ is convex.

 Let $H$ be the support of $A\cap B$, that is, $H$ is the smallest affine subspace of $A$ containing $A\cap B$. If $E\subset H$, we denote by $\In_H(E)$ its interior in $H$.  By \cite[Proposition 3.14]{hebert2020new}, there exist $k\in \N$, enclosed subsets $P_1,\ldots, P_k$ of $A$ such that $A\cap B=\bigcup_{i=1}^k P_i$ and such that for all $i\in \llbracket 1,k\rrbracket$, there  exists an apartment isomorphism $\phi_i:A\overset{P_i}{\rightarrow} B$.  As $A\cap B$ and the $P_i$ are convex, there exists $i\in \llbracket 1,k\rrbracket$ such that $\In_H(P_i)\neq \emptyset$. 

Let us prove that $\phi_i$ fixes $A\cap B$. Let $a\in \In_H(P_i)$. Let $b\in A\cap B$. Let $f=\phi_i^{-1}\circ f_{a,b}:A\rightarrow A$. Then $f$ fixes a neighborhood of $a$ in $[a,b]$ and as it is an affine isomorphism, it fixes $\LC(a,b)$. In particular $f(b)=b$ and hence $\phi_i(b)=f_{a,b}(b)=b$, which proves that $\phi_i$ fixes $A\cap B$. By \cite[Proposition 3.22]{hebert2020new}, $A\cap B$ is enclosed, which completes the proof of the theorem.
\end{proof}

\begin{corollary}\label{corDefinition}
Let $\I$ be a set  endowed with a covering $\mathcal{A}$ of subsets called \textbf{apartments}. We assume that $(\alpha_i)_{i\in I}$ is free in $\A^*$ and that $(\alpha_i^\vee)_{i\in I}$ is free in $\A$. Then $\I$ is a masure in the sense of \cite[Définition 2.1]{rousseau2011masures} if and only if it satisfies the following axioms:

\par (MA I)=(MA i): Any $A\in \mathcal{A}$ is equipped with the  structure of an apartment of type $\A$.

\par (MA II) : Let $A,A'$ be two apartments.  Then $A\cap A'$ is enclosed and there exists an apartment isomorphism $\phi:A\overset{A\cap A'}{\rightarrow} A'$.

\par (MA III)=(MA iii): if $\RR$ is the germ of a splayed chimney and if $F$ is a face or a germ of a chimney, then there exists an apartment containing $\RR$ and $F$.

\end{corollary}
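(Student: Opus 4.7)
The plan is to deduce the corollary as a direct packaging of Theorem~\ref{thmIntersection_apartments} together with the equivalence between Definition~\ref{defMasures} and \cite[D\'efinition 2.1]{rousseau2011masures}, which is recalled just after Definition~\ref{defMasures} in the form \cite[Theorem 5.1]{hebert2020new}. Both results require Assumption~\ref{assFreeness}, which is built into the hypotheses of the corollary, so the equivalence is available in both directions.

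For the forward implication I would assume that $\I$ is a masure in the sense of \cite[D\'efinition 2.1]{rousseau2011masures}. By \cite[Theorem 5.1]{hebert2020new}, $\I$ then satisfies Definition~\ref{defMasures}; in particular (MA I) and (MA III) (which coincide with (MA i) and (MA iii)) are automatic. For axiom (MA II), I would invoke Theorem~\ref{thmIntersection_apartments} directly: it asserts precisely that for any two apartments $A, A'$ (with no hypothesis on $A \cap A'$), the intersection $A \cap A'$ is enclosed and there is an apartment isomorphism $\phi : A \overset{A \cap A'}{\rightarrow} A'$.

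For the converse I would observe that (MA II) is a strict strengthening of (MA ii): if two apartments $A, A'$ happen to contain a common generic ray, then (MA II) a fortiori gives that $A \cap A'$ is enclosed and that there exists an apartment isomorphism $\phi : A \overset{A \cap A'}{\rightarrow} A'$. Hence (MA I), (MA II), (MA III) imply (MA i), (MA ii), (MA iii), so $\I$ satisfies Definition~\ref{defMasures}. A second application of \cite[Theorem 5.1]{hebert2020new} then yields that $\I$ is a masure in the sense of \cite[D\'efinition 2.1]{rousseau2011masures}.

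All the real content is concentrated in Theorem~\ref{thmIntersection_apartments} and in the previously established equivalence \cite[Theorem 5.1]{hebert2020new}; once both are granted, there is no remaining obstacle, only the trivial verification that (MA ii) follows from (MA II) by forgetting the hypothesis on the existence of a common generic ray.
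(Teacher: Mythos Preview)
Your argument is correct and is exactly the intended one: the corollary is stated in the paper without proof because it is an immediate consequence of Theorem~\ref{thmIntersection_apartments} together with the equivalence \cite[Theorem 5.1]{hebert2020new} recalled after Definition~\ref{defMasures}, and you have spelled out precisely that packaging.
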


\begin{remark}
 By \cite[Proposition 4.25 and Remark 4.26]{hebert2020new}, if $\I$ is thick (which means that each panel of $\I$ is dominated by at least three chambers)  and if there exists a group acting strongly transitively on $\I$ (see \cite[3.1.5 and Corollary 4.4.40]{hebert2018study} for the definition of such an action), then for every enclosed subset $P$ of $\A$ such that $\mathring{P}\neq \emptyset$, there exists an apartment $A$ such that $A\cap \A=P$. This is in particular true when  $\I$ is the masure associated with a split Kac-Moody group over a valued field.
\end{remark}

\bibliography{/home/auguste_pro/Documents/Projets/bibliographie.bib}

\bibliographystyle{plain}

\end{document}